\documentclass[reqno,12pt]{amsart}

\textheight=21truecm
\textwidth=15truecm
\voffset=-1cm
\hoffset=-1cm

\usepackage{color}
\usepackage{amsmath, amsthm, amssymb}
\usepackage{amsfonts}
\usepackage[ansinew]{inputenc}
\usepackage[dvips]{epsfig}
\usepackage{graphicx}
\usepackage[english]{babel}
\usepackage{hyperref}
\theoremstyle{plain}
\newtheorem{thm}{Theorem}[section]
\newtheorem{cor}[thm]{Corollary}
\newtheorem{lem}[thm]{Lemma}
\newtheorem{prop}[thm]{Proposition}

\theoremstyle{definition}
\newtheorem{defi}[thm]{Definition}

\theoremstyle{remark}
\newtheorem{rem}[thm]{Remark}

\numberwithin{equation}{section}

\newcommand{\average}{{\mathchoice {\kern1ex\vcenter{\hrule height.4pt
width 6pt depth0pt} \kern-9.7pt} {\kern1ex\vcenter{\hrule
height.4pt width 4.3pt depth0pt} \kern-7pt} {} {} }}

\def\R{\mathbb{R}}

\begin{document}

\title[Stable $s$-minimal cones in $\mathbb{R}^3$ are flat  for $s\sim 1$]
{Stable $s$-minimal cones in $\mathbb{R}^3$ are flat  for $s\sim 1$}

\author{Xavier Cabr\'e}
\author{Eleonora Cinti}
\author{Joaquim Serra}

\address{X. Cabr\'e \textsuperscript{1,2},
\newline
\textsuperscript{1} Universitat Polit\`ecnica de Catalunya, Departament
de Matem\`{a}tiques, Diagonal 647,
08028 Barcelona, Spain
\newline
\textsuperscript{2} ICREA, Pg. Lluis Companys 23, 08010 Barcelona, Spain}
\email{xavier.cabre@upc.edu}


\address{E. Cinti, Universit\`a degli Studi di Bologna, Dipartimento di Matematica, Piazza di Porta San Donato 5,
40126 Bologna , Italy .}
\email{eleonora.cinti5@unibo.it}
\address{J. Serra, Eidgen\"ossische Technische Hochschule Z\"urich, R\"amistrasse 101, 8092 Zurich, Switzerland}
\email{joaquim.serra@math.ethz.ch}

\thanks{The authors are supported by MINECO grant MTM2014-52402-C3-1-P
and are part of the Catalan research group 2014 SGR 1083. The first author is member of the Barcelona
Graduate School of Mathematics}

\begin{abstract}
We prove that half spaces are the only  stable nonlocal $s$-minimal cones in $\mathbb{R}^3$, for $s\in(0,1)$ sufficiently close to $1$. This is the first classification result of stable $s$-minimal cones in dimension higher than two. Its proof can not rely on a compactness argument perturbing from $s=1$. In fact, our proof gives a quantifiable value for the required closeness of $s$ to $1$. We use the geometric formula for the second variation of the fractional $s$-perimeter, which involves a squared nonlocal second fundamental form, as well as the recent BV estimates for stable nonlocal minimal sets.
\end{abstract}

\maketitle


\section{Introduction}\label{intro}

In this paper we  prove that half spaces are the only stable nonlocal $s$-minimal cones ---with smooth boundary away from $0$---  in dimension $n=3$ for $s\in (0,1)$ sufficiently close to $1$ (see Theorem \ref{thmcones3}). The same classification result for stable $s$-minimal cones in dimension $n=2$ for any $s\in (0,1)$ has been established in \cite{SV-mon}. For short, we will refer to nonlocal $s$-minimal cones as $s$-minimal cones.

For minimizing cones (a stronger assumption than stability) a similar flatness result was proven  by Savin and Valdinoci \cite{SV} in dimension $n=2$ for any $s\in(0,1)$, and  by Caffarelli and Valdinoci \cite{CV} in every dimension $2 \le n \le 7$  for $s\in (0,1)$ sufficiently close to $1$. The result in \cite{CV}  relies on the classification of classical ($s=1$) minimizing cones of Simons \cite{Simons} and extends it to $s$ sufficiently close to $1$ through a compactness argument.

Our statement is also ``for $s$ sufficiently close to $1$'', but ---unlike in \cite{CV}---  it cannot be deduced from the limit case $s=1$  by some sort of compactness argument.
The reason being  that ---unlike in the framework of minimizers---   $E_k$ being stable cones for the $s_k$-perimeter with $s_k\uparrow 1$ does not guarantee the sequence $E_k$ to be  compact. We must rule out, for instance, an hypothetical situation in which the traces of $E_k$ on $S^2$ were (unions of) curves with their total classical perimeter increasing to infinity.
As a matter of fact, and at least in $\R^3$, proving the compactness of sequences $E_k$ of stable cones turns out to be as difficult as proving the flatness result ---which then trivially gives the compactness since planes through the origin are compact.

Let us remark also that the classification of stable cones in low dimensions turns out to be significantly more challenging for $s\in(0,1)$  than in the classical case $s=1$. Indeed, as mentioned above, when $n=2$ the classification of stable $s$-minimal cones ---for all $s\in(0,1)$--- requires already a clever idea \cite{CSV, SV-mon}. Moreover, in  the case $n=3$ of this paper, there is an even larger gap of difficulty between $s\in (0,1)$ and $s=1$. Indeed,  in the classical perimeter case $s=1$,  the trace $\partial \Sigma\cap S^2$ on the sphere  of every stationary\footnote{$\partial \Sigma$ has zero mean curvature} cone $\Sigma\subset \R^3$ with $C^2$ boundary away from $0$ is immediately a maximal circle ---and here the stability assumption is not even required. This is proven just using  that the zero mean curvature condition on $\partial \Sigma$ is equivalent to a zero tangential curvature condition for the $C^2$ curve $\partial \Sigma\cap S^2$.   For $s\in(0,1)$, however, the nonlocal character of the equation of $s$-minimal cones makes it impossible for such sort of ``ODE type'' approach.

Before stating precisely  our main result, we recall the notion of fractional $s$-perimeter, which was introduced by Caffarelli, Roquejoffre, and Savin in \cite{CRS}. Given a set $E$ in $\R^n$ and a bounded open set $\Omega\subset \R^n$, we define the fractional $s$-perimeter of $E$ in $\Omega$ as
\begin{equation}\label{def-per}
P_{s}(E,\Omega):=L(E\cap \Omega,E^c \cap \Omega) + L(E\cap\Omega, E^c \cap \Omega^c) +L(E\cap\Omega^c, E^c \cap \Omega) ,
\end{equation}
where $E^c$ denotes the complement of $E$ in $\R^n$ and, for two disjoint measurable sets $A$ and $B$, $L(A,B)$ denotes the quantity
$$L(A,B):=\int_A\int_B\frac{1}{|x-\bar x|^{n+s}}dx d\bar x.$$

Minimizers for the fractional perimeter, with special interest in their regularity, were studied in several works; see \cite{BFV,CRS,CV,CSV, FV, SV}. However, to our knowledge, the only available results for stable sets of the $s$-perimeter have been obtained recently in \cite{CSV}. This article includes sharp $BV$ and energy estimates in every dimension $n\ge 2$, and quantitative flatness results in dimension $n=2$.

Let us state the main result of the current paper. We say that $\Sigma\subset \R^n$ is a  cone when $\lambda\Sigma =\Sigma$ for all $\lambda>0$.  We will always take $\Sigma$ to be an open set. Its boundary $\partial \Sigma$, a hypersurface in $\R^n$, will also be called a cone.  The following is the definition of stability that we use.

\begin{defi}\label{defstablecone}
Let $\Sigma\subset \R^{n}$ be a cone with nonempty boundary of class $C^2$ away from the origin.
We say that  $\Sigma$  is stable if
\begin{equation}\label{stabilityweak}
\liminf_{t\to 0} \frac{1}{t^2} \big(P_s(\phi^t_X(\Sigma), B_1)-P_s(\Sigma, B_1))\ge  0
\end{equation}
for all vector fields $X\in C^\infty_c(B_1\setminus\{0\}, \R^n)$. Here $\phi^t_X:\R^n\to \R^n$ denotes the integral flow of $X$ at time $t$ (which is a smooth diffeomorphism for $t$ small).

Throughout the paper, $\Sigma$ being stable as in this definition will also be referred to as $\Sigma$ being a stable cone for the $s$-perimeter in $\R^n$, or $\Sigma$ being a stable $s$-minimal cone in $\R^n$.
\end{defi}

Note that $\phi^t_X$  is the identity in a neighborhood of  $0$ and, thus, this is the weakest possible notion of stability of cones that one may assume. In Section 2 we will briefly discuss other notions of stability for the $s$-perimeter.

It is easy to see that if $\Sigma$ is as in Definition \ref{defstablecone} (in particular, $\partial\Sigma$ is $C^2$ away from $0$) then $\partial \Sigma$ is stationary away from $0$, and hence it is a solution of the nonlocal minimal surface equation (also away from $0$). Moreover, using that $\Sigma$ is a cone one can show (see the proof of Theorem \ref{thmcones3} for the details) that $\partial\Sigma$ is a viscosity solution of the nonlocal minimal surface equation also in $0$.

The following is our main result.
\begin{thm}\label{thmcones3}
There exists $s_*\in (0,1)$ such that for every $s\in (s_*,1)$ the following statement holds.

Let $\Sigma \subset \R^3$ be a cone with nonempty boundary of class $C^2$ away from $0$. Assume that $\Sigma$ is stable as in Definition \ref{defstablecone}.
Then, $\Sigma$ is a half space.
\end{thm}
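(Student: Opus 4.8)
The plan is to combine the geometric second variation formula for the fractional perimeter with the recently established BV estimate for stable nonlocal minimal sets. First I would set up the second variation: since $\Sigma$ is a stable cone with $C^2$ boundary away from $0$, the stability inequality \eqref{stabilityweak} expands into a geometric form involving a squared nonlocal second fundamental form $|A_s(x)|^2$ integrated against $\varphi^2$, minus a nonlocal ``Jacobi field'' term, valid for all $\varphi\in C^\infty_c(B_1\setminus\{0\})$. The key structural input is that, because $\Sigma$ is a cone in $\R^3$, the position vector field $x$ is tangent to $\partial\Sigma$ and generates a (degenerate) zero-eigenfunction of the stability operator; this scaling invariance should let me ``decouple'' the radial direction and reduce the stability inequality on $\partial\Sigma\subset\R^3$ to a one-dimensional spectral inequality on the trace curve $\Gamma=\partial\Sigma\cap S^2$. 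Concretely, using logarithmic cutoffs $\varphi$ supported in an annulus $B_R\setminus B_\varepsilon$ and letting $\varepsilon\to 0$, $R\to\infty$, the cone structure forces the nonlocal curvature term $\int |A_s|^2\varphi^2$ to be controlled, which in turn should force $\int_{\partial\Sigma}|A_s|^2$ (suitably normalized) to vanish or be small.

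The second main step is to invoke the sharp BV and energy estimates for stable $s$-minimal sets from \cite{CSV}: for a stable cone $\Sigma\subset\R^3$ these give a uniform (in $s$, for $s$ close to $1$) bound on the perimeter-type quantity of $\Sigma$ in $B_1$, i.e. $P_s(\Sigma,B_1)\le C$ and even a bound on $\int_{\partial\Sigma\cap B_1}|\nabla_{\partial\Sigma}\nu|$ or the analogous nonlocal quantity, with $C$ independent of $s\in(s_*,1)$. Crucially, these estimates rule out the pathological scenario described in the introduction (trace curves with total classical length blowing up as $s\to 1$). Combined with the first step, I would then argue that the nonlocal total curvature of $\Gamma$ in $S^2$ is small when $s$ is close to $1$; passing to the geometric limit $s\to 1$ (now legitimate because we have the uniform BV bound, so a compactness argument \emph{is} available once the a priori estimate is in hand) the trace curve $\Gamma$ must converge to a geodesic of $S^2$, i.e. a great circle, and by the quantitative nature of the estimates one gets that $\Gamma$ is \emph{exactly} a great circle for $s>s_*$, hence $\Sigma$ is a half space.

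An alternative, more self-contained route for the final contradiction: assume $\Sigma$ is not a half space; then $A_s\not\equiv 0$ on $\partial\Sigma$, and by the cone structure $|A_s(x)|=|x|^{-1}|A_s(x/|x|)|$ is homogeneous of degree $-1$. Testing the geometric stability inequality with $\varphi(x)=\eta(x)|x|^{-1/2}$-type cutoffs (the borderline homogeneity in $\R^3$, mimicking the classical $n=3$ Simons-type computation) and using that the nonlocal lower-order term is, for $s$ close to $1$, a small perturbation of the local one, I would derive that the ``$s$-Hardy'' constant obstruction present at $s=1$ in $\R^3$ persists for $s$ near $1$, contradicting $A_s\not\equiv 0$. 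The delicate point throughout is quantifying the dependence on $s$: one must show the error terms coming from the nonlocal kernel (as opposed to the Laplacian) are controlled by a modulus $\omega(1-s)\to 0$ \emph{uniformly on stable cones}, and here the BV estimates of \cite{CSV} are exactly what prevents these errors from concentrating.

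\textbf{Main obstacle.} I expect the hardest part to be making the passage from $s\sim 1$ rigorous \emph{without} compactness of the family of stable cones — that is, establishing the uniform-in-$s$ a priori control that turns the heuristic ``$s$-minimal cones degenerate to $s=1$ cones'' into a genuine quantitative estimate with an explicit threshold $s_*$. Equivalently: handling the interaction between the singularity at $0$ (where $\partial\Sigma$ need only be a viscosity solution) and the scaling-borderline choice of test function, while keeping every constant independent of $s$. The technical heart is the careful analysis of the nonlocal second fundamental form term near the vertex and the verification that the \cite{CSV} BV bound is strong enough to close the argument in dimension $n=3$ (and, consistently with the introduction's remarks, \emph{not} in higher dimensions).
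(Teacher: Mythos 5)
Your proposal stalls exactly at the point you flag as the ``main obstacle,'' and it also rests on a factual error about the input from \cite{CSV}. The BV estimate for stable sets (Theorem \ref{perestimate}) is \emph{not} uniform in $s$: it gives ${\rm Per}_{B_{1/2}}(\Sigma)\le C/(1-s)$, so the length of the trace $\gamma=\partial\Sigma\cap S^2$ is a priori only $O((1-s)^{-1})$ and may blow up as $s\uparrow1$. Consequently the step ``a compactness argument \emph{is} available once the a priori estimate is in hand,'' and likewise the alternative route in which the nonlocal terms are treated as uniformly small perturbations of the classical Simons computation, are precisely the arguments that cannot be run here: without a uniform bound on $H^1(\gamma)$ there is no geometric limit to pass to, and no way to make the ``error terms $\le\omega(1-s)$ uniformly on stable cones'' claim. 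The actual mechanism that replaces compactness is missing from your plan: one plugs into the second variation formula a \emph{radial} test function that almost saturates the sharp fractional Hardy inequality in dimension $2$, whose optimal constant degenerates like $(1-\sigma)^2$ (Theorem \ref{optctthardy}, Corollary \ref{corhardy}); after comparing the kernel integrals over $\gamma\times\gamma$ with those over $S^1\times S^1$ by means of the $C/(1-s)$ perimeter bound applied at every scale (Lemma \ref{lemcompareS1}), the two degeneracies cancel and one obtains only the proportional bound $\int_\gamma c^2_{s,\partial\Sigma}\,dH^1\le C\,H^1(\gamma)$ (Proposition \ref{crucial3d}) --- not smallness of the curvature, and not yet any length control.

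The second missing block is how to go from that proportional bound to flatness. The heart of the paper is Lemma \ref{lemcurv}: a closed simple curve on $S^2$ of length $L\le C_0(1-s)^{-1}$ whose squared nonlocal second fundamental form integrates to at most $C_0L$ must have $L\le C$ universally. This is proved not by any spectral or limiting argument but by a quantitative topological obstruction (Lemma \ref{topolog}): on a majority of intervals of length $5\pi$ the normal is nearly constant, so the curve traces two loops near an equator, and injectivity forces a return passage with essentially opposite conormal, which pumps an amount $\gtrsim(1-s)\delta^{-(1+s)/2}$ into the curvature energy; iterating improves $L\lesssim(1-s)^{-4/7}$ to $(1-s)^{-1/7}$ to $L\le C$. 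Nothing in your proposal produces this length bound, and your log-cutoff/radial-decoupling heuristic (``forces $\int|A_s|^2$ to vanish or be small'') does not survive the fact that all constants carry the $1/(1-s)$ loss. Finally, even granting that $\gamma$ is $C^{1,1/4}$-close to a great circle, ``exactly a great circle by the quantitative nature of the estimates'' is not a proof: the paper must still rule out several components of $\gamma$ (via an interaction estimate of the type \eqref{final}), show that the cone is a viscosity solution also at the vertex, upgrade stability to minimality by the sliding/foliation argument for graphs, and invoke the regularity of Lipschitz $s$-minimal graphs \cite{FV} to conclude $\partial\Sigma$ is a plane. These steps, together with the Hardy--BV cancellation and the curve-length lemma, are the content of the theorem and are absent from the proposal.
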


As mentioned before, Theorem \ref{thmcones3} is the first classification result for stable $s$-minimal cones in dimension $n=3$. The analogue result for $n=2$ and for any $s\in(0,1)$ was established in \cite{SV-mon} (see also the quantitative version \cite{CSV}).

We stress that our result is not a perturbative result from $s=1$ which can be obtained by some sort of compactness argument. In fact, a careful inspection of our proof gives an explicit (computable) value for $s_*$, something impossible when using compactness arguments.

A consequence of Theorem \ref{thmcones3} is the following.
\begin{cor}\label{cor}
There exists $s_*\in (0,1)$ such that for every $s\in (s_*,1)$ the following statement holds.

Let $E$ be an open  subset of $\R^3$. Assume that $\partial E$ is nonempty and of class $C^2$, and that $E$ is a stable set for the $s$-perimeter. Then, $E$ is a half space.
\end{cor}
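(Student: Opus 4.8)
The plan is to deduce Corollary~\ref{cor} from Theorem~\ref{thmcones3} by a blow-down argument combined with the rigidity in the monotonicity formula for the $s$-perimeter. Since $\partial E\neq\emptyset$, I would fix a point $x_0\in\partial E$ and, after a translation, assume $x_0=0$. For $r>0$ put $E_r:=r^{-1}E$; every $E_r$ is an open set with $C^2$ boundary passing through $0$, and it is stable for the $s$-perimeter because the first and second variations of $P_s$ are homogeneous under rescalings. Let $\Theta_s(E,0,r)$ denote the monotone quantity associated with $E$ at $0$ in the monotonicity formula of Caffarelli, Roquejoffre and Savin \cite{CRS}, and let $\vartheta$ be the value it takes on a half space.

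First I would construct the blow-down. By the $BV$ and perimeter estimates for stable sets from \cite{CSV}, the family $\{E_r\}_{r\geq1}$ has locally uniformly bounded fractional and classical perimeter, so along some sequence $R_j\to+\infty$ one has $E_{R_j}\to\Sigma$ in $L^1_{\mathrm{loc}}(\R^3)$. By the monotonicity formula \cite{CRS}, $\Sigma$ is a cone with vertex $0$, and, stability being preserved in the limit, $\Sigma$ is a stable $s$-minimal cone in $\R^3$. To apply Theorem~\ref{thmcones3} to $\Sigma$ I still need $\partial\Sigma$ to be of class $C^2$ away from $0$, and I would obtain this by a dimension-reduction argument: if some point $p\in\partial\Sigma\setminus\{0\}$ were singular, then blowing $\Sigma$ up at $p$ would yield (since $\Sigma$ is a cone) a cylinder $\Sigma'\times\R$ with $\Sigma'\subset\R^2$ a stable $s$-minimal cone, hence a half-plane by the two-dimensional classification of \cite{SV-mon}; thus the blow-up of $\Sigma$ at $p$ would be a half space, and the regularity theory for stable nonlocal minimal surfaces would then make $\partial\Sigma$ smooth near $p$, a contradiction. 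Therefore $\partial\Sigma$ is $C^2$ away from $0$, and Theorem~\ref{thmcones3} (with the same $s_*$) gives that $\Sigma$ is a half space; in particular $\lim_{r\to+\infty}\Theta_s(E,0,r)=\vartheta$.

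Next I would examine the blow-up at $0$. Since $\partial E$ is of class $C^2$ at $0$, the rescalings $E_r$ converge as $r\to0^+$ to the tangent half space of $E$ at $0$, so $\lim_{r\to0^+}\Theta_s(E,0,r)=\vartheta$ as well. As $r\mapsto\Theta_s(E,0,r)$ is nondecreasing by \cite{CRS} and has the same limit at $0^+$ and at $+\infty$, it must be constant, and the equality (rigidity) case of the monotonicity formula then forces $E$ to be a cone with vertex $0$. Finally, $E$ is a cone whose boundary is $C^2$ also at the vertex $0$, and the only $C^2$ cone-hypersurface through its vertex is a hyperplane; hence $\partial E$ is a hyperplane and $E$ is a half space. (Alternatively, a second application of Theorem~\ref{thmcones3}, now that $E$ is known to be a stable cone with $C^2$ boundary away from $0$, concludes the proof.)

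The step I expect to be the main obstacle is the blow-down: one must know that the rescalings $E_r$ are precompact in $L^1_{\mathrm{loc}}$ (this is where the uniform $BV$ and perimeter estimates of \cite{CSV} are used), that the limit is a genuine stable $s$-minimal cone with boundary of class $C^2$ away from the origin (monotonicity formula of \cite{CRS}, together with dimension reduction and the planar classification \cite{SV-mon}), and that the equality case of the monotonicity formula upgrades ``$\Theta_s(E,0,\cdot)$ constant'' to ``$E$ is a cone''. Each of these ingredients is available in, or a by-now-standard adaptation of, the cited works, so the remaining verifications should be routine.
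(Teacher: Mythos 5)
Your overall route --- blow down, classify the limit cone via Theorem \ref{thmcones3}, then rigidify with the monotonicity formula --- is exactly the strategy the paper attributes to the forthcoming work \cite{CCS} (the corollary is not proved in this paper), and the paper lists the ingredients it needs: (i) the universal perimeter estimate, (ii) density estimates, (iii) the monotonicity formula, (iv) improvement of flatness. Measured against that list, your sketch has a genuine gap at the limit-transfer step. The phrase ``stability being preserved in the limit'' (and, implicitly, the viscosity-solution property of the limit) is not automatic from $L^1_{\rm loc}$ convergence: the stability inequality of Theorem \ref{propFFFMM} is only available for $C^2$ boundaries, Section 2 of the paper stresses that only weak stability is well suited to passages to the limit and that its equivalence with stability is known only for $C^2$ sets, and the improvement of flatness of \cite{CRS} that you invoke inside the dimension reduction applies to viscosity solutions --- a property not known for general stable limits. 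This is precisely where the density estimates (ii), absent from your proposal, enter: they upgrade $L^1_{\rm loc}$ convergence of $E_{R_j}$ to local Hausdorff convergence of the boundaries, which is what allows one to transfer the viscosity-solution property and the stability/stationarity information to the blow-down cone, and also to identify $\lim_{r\to\infty}\Theta_s(E,0,r)$ with the density of the limit cone (mere $L^1_{\rm loc}$ convergence of sets does not by itself give convergence of the extension energies appearing in $\Theta_s$).

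The dimension-reduction step is also stated too quickly: at a point $p\in\partial\Sigma\setminus\{0\}$ the set $\Sigma$ is not yet known to be $C^2$, so the monotonicity formula and the blow-up construction there need the same care as above; moreover you must argue that stability of the cylinder $\Sigma'\times\R$ in $\R^3$ descends to stability of the cross-section $\Sigma'$ in $\R^2$ (a cut-off argument in the translation-invariant variable, not immediate for nonlocal energies) before the planar classification of \cite{SV-mon} can be applied. Your endgame, on the other hand, is fine and is a nice shortcut: since $\partial E$ is globally $C^2$, the density at $0$ at small scales is the half-space value $\vartheta$, so constancy of $\Theta_s$ plus the rigidity case of \cite{CRS} makes $E$ a cone, and a cone with $C^2$ boundary through its vertex is a half space; this legitimately avoids applying improvement of flatness to $E$ itself, although (iv) is still needed inside the dimension reduction. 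In short: right approach, same as the paper's intended one, but the compactness and limit-transfer statements you label routine are exactly the content the paper defers to \cite{CCS}, and your write-up does not supply them.
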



For the reasons explained below, the proof of Corollary \ref{cor} will be given in full detail in the forthcoming paper \cite{CCS}.
It follows a rather standard (at least in the context of minimizers) blow-down approach. 
Besides the classification of stable cones from Theorem \ref{thmcones3}, the proof of Corollary \ref{cor} needs the following four ingredients, which are known in the setting of stable $s$-minimal sets provided that their boundaries are $C^2$:
\begin{itemize}
\item[(i)] Universal perimeter estimate (established for $C^2$ stable sets in \cite{CSV});
\item[(ii)] Density estimates (established for $C^2$ stable sets in \cite{CCS});
\item[(iii)] Monotonicity formula (established for minimizers in  \cite{CRS} with a proof that works also for $C^2$ stable sets);
\item[(iv)] Improvement of flatness (established for minimizers in  \cite{CRS} with a proof that works also for $C^2$ stable ---or even stationary--- sets).
\end{itemize}

Note that in the context of classical minimal surfaces,  (i), (ii), and (iv) are known for minimizers but not for stable sets (at least in dimensions $n>3$). This is why the analogue of Corollary \ref{cor} (that is, the classification of minimal surfaces, and not only cones) in dimensions $3\leq n\leq 7$, is only known for minimizers although stable minimal cones are completely classified.

The main obstruction to remove the $C^2$ assumption is  (iv), since the improvement of flatness in \cite{CRS} has been established for viscosity solutions of the nonlocal minimal surface equation. Although it is obvious that $C^2$ stable $s$-minimal sets are viscosity solutions, the same is not known for generic stable sets.

Properties analogous to (i)-(iv) will appear in our forthcoming paper \cite{CCS} in the context of 
stable solutions to the fractional Allen-Cahn equation
\begin{equation}\label{ACE}
(-\Delta)^{s/2}u=u-u^3,\quad |u|<1\quad \mbox{in }\R^n.
\end{equation} 
We will prove there a classification result analogous to Corollary \ref{cor}, but now for equation \eqref{ACE}.
The proofs for solutions to \eqref{ACE} and for $s$-minimal surfaces 
are roughly the same and, moreover, both use some tools that are not needed in the present paper (e.g., the 
Caffarelli-Silvestre extension). For these reasons, we have decided to differ the details of the proof of 
Corollary \ref{cor} to \cite{CCS}. 

The abstract classification result in \cite{CCS}  (which was actually the primary motivation of the
present paper) is: 

\vspace{6pt}

{\em Assume that for some pair $(n,s)$ the half-spaces are the only   stable $s$-minimal cones in $\R^m$  
(which are smooth away from $0$) for $2\le m \le n$. Then, every stable solution of \eqref{ACE} in $\R^n$ is a 1D profile, 
that is, $u(x)=\phi(e\cdot x)$ for some increasing function $\phi:\R\to (-1,1)$ and $e\in S^{n-1}$.}

\vspace{6pt}

As a consequence of this statement and of Theorem \ref{thmcones3}, 
we establish in \cite{CCS}:
\begin{itemize}
\item [i)] 1D profiles are the only {\em stable} solutions of \eqref{ACE} when $n=3$ and $s\in (0,1)$ is sufficiently
close to $1$;
\item [ii)] 1D profiles are the only  {\em  monotone} solutions of \eqref{ACE} when $n=4$ and $s\in (0,1)$ is 
sufficiently
close to $1$.
\end{itemize}
Previously, 1D symmetry of stable solutions to \eqref{ACE} for $s/2 <1/2$ was only known in dimension 2.

The proof in \cite{CCS} of the classification result for \eqref{ACE} establishes that blow down sequences 
$u(R_k x)$ converge to $\chi_{\Sigma}-\chi_{\Sigma^c}$ where $\Sigma$ is a stable minimal cone which, after a dimension reduction, can be assumed to be smooth away from $0$. Furthermore, in \cite{CCS}  we prove density estimates ensuring the local uniform convergence of the level sets of $u$ to $\partial \Sigma$ (in the sense of the Hausdorff distance). As a consequence, if we know that the cone is a half space, the improvement of flatness theory for ``genuinely nonlocal'' phase transitions established in \cite{dPSV} gives that $u$ must be a 1D profile.

Let us finally comment on the proof of Theorem \ref{thmcones3}. It will use three important ingredients from recent works, namely:
\begin{itemize}

\item[a)] The  second variation formula for the nonlocal perimeter from \cite{DDPW,FFFMM}, which involves a squared nonlocal second fundamental form and that we recall  in Theorem \ref{propFFFMM}.

\item[b)] The behavior as $s\uparrow 1$ of the optimal constant in the fractional Hardy inequality in dimension two, which can be found for instance in \cite{FS}, and which we recall in Theorem \ref{optctthardy}.

\item[c)] The universal $BV$ estimate for stable sets of \cite{CSV}, which we recall in Theorem \ref{perestimate}. In particular, the information that its best constant may be bounded by $\frac{C}{1-s}$ when $s\uparrow 1$.
\end{itemize}

To prove Theorem \ref{thmcones3} we plug in the stability inequality given by a)   a radial function that  ``almost saturates'' the Hardy inequality b) in dimension two. Then, we integrate in the radial variable, and appropriately use the universal BV estimate c) ---at every scale--- to relate the integrals  on $\partial \Sigma$ (a curved two dimensional cone) with the integrals in $\R^2$ appearing in the Hardy inequality. With this, we obtain an integral control on  $\partial \Sigma \cap S^2$ for the nonlocal version of the squared second fundamental form of  $\partial \Sigma$. This control is given in Proposition \ref{crucial3d} and is the main goal of Section \nolinebreak  4.

Concluding the flatness of the cone from the control in Proposition \ref{crucial3d} is not a straightforward task.
To do it, we need a series of lemmas on curves on $S^2$ ---given in Section 5---  the cornerstone of which is Lemma \ref{lemcurv}.  It establishes bounds on the length of a curve on $S^2$ from an integral control on its squared nonlocal second fundamental form.
Interestingly, a crucial ingredient in the proof of  Lemma \ref{lemcurv} is an elementary ``topological'' observation on closed injective curves in the cylinder $S^1\times(-1,1)$, which is given in Lemma \ref{topolog}.
The application of these lemmas gives that, for $s$ close enough to 1, that curve $\partial \Sigma \cap S^2$ is a simple curve that  is very close--- in a $C^{1,1/4}$ norm--- to a maximal circle. We conclude that the curve must be a maximal circle using the classification of $s$-minimal Lipschitz graphs of Savin and Valdinoci \cite{SV}.

The proof of Theorem \ref{thmcones3} is given in Section 6 by combining all the previous results.

\section{On the notion of stable sets for the $s$-perimeter}

Throughout the paper the notion of stability that we consider is the one of Definition \ref{defstablecone}, which is given specifically in the context of cones in $\R^3$ with $C^2$ boundary away from $0$. 
In this setting, Definition \ref{defstablecone}  is the weakest notion of stability one can think of  ---note that we do not need to allow perturbations that affect the vertex of the cone.

For the sake of clarity, we recall now the notion of stability that  was introduced and used  in \cite{CSV}, and we explain below why this was done. It applies to general sets of finite $s$-perimeter. 
\begin{defi}[\cite{CSV}, stability]\label{stability}
A set $E\subset \R^n$  with $P_s(E,\Omega)<\infty$ is said to be {\it stable in} $\Omega$ if for every given vector field $X=X(x,t)\in C^\infty_c(\Omega\times(-1,1); \R^n)$ we have
\[
\liminf_{t\to 0} \frac{1}{t^2} \big( P_s(\phi_X^t(E)\cap E,  \Omega)- P_s(E, \Omega)\big) \ge 0
\]
and
\[
\liminf_{t\to 0} \frac{1}{t^2} \big( P_s(\phi_X^t(E)\cup E,  \Omega)- P_s(E, \Omega)\big) \ge 0,
\]
where $\phi_X^t$ is the integral flow of $X$ at time $t$.
\end{defi}

Another possible notion of stability, which is weaker than the one given in Definition \ref{stability} above, is the following:

\begin{defi}\label{defweak}
A set $E\subset \R^n$  with $P_s(E,\Omega)<\infty$ is said to be {\it weakly stable in} $\Omega$ if for every given vector field $X=X(x,t)\in C^\infty_c(\Omega\times(-1,1); \R^n)$ we have
\[
\liminf_{t\to 0} \frac{1}{t^2} \big( P_s(\phi_X^t(E),  \Omega)- P_s(E, \Omega)\big) \ge 0,
\]
where $\phi_X^t$ is the integral flow of $X$ at time $t$.
\end{defi}

Notice that every stable set $E$ (i.e., satisfying Definition \ref{stability}) is also weakly stable (in the sense of Definition \ref{defweak}),
as it is immediately shown using the inequality
\[
P_s(\phi_X^t(E), \Omega)+ P_s(E, \Omega) \ge P_s(\phi_X^t(E)\cap E,  \Omega) + P_s(\phi_X^t(E)\cup E,  \Omega).
\]


For $s\in(0,1]$, both definitions are known to be equivalent\footnote{See Remark \ref{equivalencedefs}} when applied to sets $E$ with $C^2$ boundary in $ \Omega$. Thus, our stability assumption in Definition \ref{defstablecone} and Theorem \ref{thmcones3} is equivalent to the cone with $C^2$ boundary away from the origin being stable in $\R^n\setminus \{0\}$ in the sense of Definition \ref{stability}, and also to being weakly stable in $\R^n\setminus \{0\}$ as in Definition \ref{defweak}.

Note that, for the classical perimeter ($s=1$), Definition \ref{stability} ---and not Definition \ref{defweak}--- is the correct notion of stability in order to rule out  cones such as ``the cross''
\[
\{ (x_1,x_2)\in \R^2\ :\ x_1x_2>0\}
\]
to be stable. The reason is that Definition \ref{stability} allows ``infinitesimal'' perturbations that ``break the topology'' of $E$ ---while Definition \ref{defweak} does not.

The previous example shows that the two notions of stability are indeed different in the limit case $s=1$ of the classical perimeter.
For $s\in(0,1)$, however,  some heuristics seem to suggest that the two definitions might be equivalent.
For instance, ``the cross'' is no longer weakly stable, due to nonlocal effects.

It is an  open question whether (or not), in the nonlocal case $s\in(0,1)$,  every weakly stable set is stable. This statement, if true, would be very useful to obtain ---using the BV estimates of \cite{CSV}--- clean compactness results for stable sets for the $s$-perimeter ---with $s\in(0,1)$ fixed---, since weak stability is better suited for passages to the limit.

\section{Previously known ingredients that our proof uses}

As explained in the introduction, the proof of Theorem \ref{thmcones3} uses three main ingredients from previous works, which we gather here.

First, we will use a formula, found in   \cite{FFFMM,DDPW}, for  the second (normal) variation of the fractional perimeter. We state it in $\R^3$ but an analogue in $\R^n$ also holds true.
\begin{thm}[\cite{FFFMM, DDPW}] \label{propFFFMM}
Let $\Sigma \subset \R^3$ be a stable cone for the $s$-perimeter.  Assume that $\partial \Sigma$ is $C^2$ away from  $0$. Then, for every $\zeta\in C^2_c(\R^3\setminus \{0\})$ we have
\[
\int_{\partial \Sigma} c^2_{s, \partial \Sigma}(x) |\zeta(x)|^2 \,dH^{2}(x)\leq 
\iint_{\partial \Sigma\times\partial \Sigma}  \frac{	\big|\zeta(x)-\zeta(y)\big|^2 }{|x-y|^{3+s}} \,dH^{2}(x)\,dH^{2}(y),
\]
where
\[
c^2_{s, \partial \Sigma}(x)  :=  \int_{\partial \Sigma} \frac{	\big|\nu_\Sigma(x)-\nu_\Sigma(y)\big|^2 }{|x-y|^{3+s}} \,dH^{2}(y)
\]
and $\nu_\Sigma(x)$ denotes the outward normal vector to $\Sigma$ at $x\in \partial \Sigma$.
\end{thm}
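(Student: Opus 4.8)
\emph{Overview of the plan.} The idea is to deduce the inequality by testing the stability condition \eqref{stabilityweak} with a vector field generating the normal deformation of $\partial\Sigma$ with speed $\zeta$, and then to recognize the resulting second variation as the difference of the two geometric quantities in the statement. First I would reduce to the case $\operatorname{supp}\zeta\subset B_{1}\setminus\{0\}$: since $\zeta\in C^{2}_{c}(\R^{3}\setminus\{0\})$ has support in some spherical shell $\overline{B_{R}}\setminus B_{r}$, and since $\Sigma=\lambda\Sigma$ for every $\lambda>0$ with $P_{s}$ homogeneous under dilations, replacing $\zeta$ by a rescaled copy $\zeta(\lambda^{-1}\,\cdot\,)$ with $\lambda$ small does not change either side of the desired inequality. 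Then I would pick $\nu$, a $C^{1}$ extension of $\nu_{\Sigma}$ to a tubular neighborhood of $\partial\Sigma$ away from the origin (e.g.\ $\nu=\nu_{\Sigma}\circ\pi$ with $\pi$ the nearest point projection, which is $C^{1}$ because $\partial\Sigma$ is $C^{2}$), set $X:=\zeta\,\nu\in C^{1}_{c}(B_{1}\setminus\{0\};\R^{3})$, and approximate $X$ in $C^{1}$ by fields in $C^{\infty}_{c}(B_{1}\setminus\{0\})$; since both sides of the claimed inequality and the second variation below depend continuously on the field in this topology, it suffices to compute $\partial_{t}^{2}\big|_{t=0}P_{s}(\phi^{t}_{X}(\Sigma),B_{1})$ and to invoke \eqref{stabilityweak}. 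Only the normal component $X\cdot\nu_{\Sigma}=\zeta$ enters this second variation, a tangential component yielding merely a reparametrization of $\partial\Sigma$.

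\emph{Reducing the second variation to a kernel expansion.} I would then use the representation, with $\mathcal{Q}:=(B_{1}\times\R^{3})\cup(\R^{3}\times B_{1})$,
\[
P_{s}(E,B_{1})=\tfrac{1}{2}\iint_{\mathcal{Q}}\frac{\big|\chi_{E}(x)-\chi_{E}(y)\big|^{2}}{|x-y|^{3+s}}\,dx\,dy ,
\]
and, writing $E_{t}:=\phi^{t}_{X}(E)$, change variables $x=\phi^{t}_{X}(x')$, $y=\phi^{t}_{X}(y')$. Because $X$ is supported away from $\partial B_{1}$ the flow leaves $\mathcal{Q}$ invariant, and $\chi_{E_{t}}\circ\phi^{t}_{X}=\chi_{E}$, so
\[
P_{s}(E_{t},B_{1})=\tfrac{1}{2}\iint_{\mathcal{Q}}\big|\chi_{E}(x)-\chi_{E}(y)\big|^{2}\,G_{t}(x,y)\,dx\,dy,\qquad G_{t}(x,y):=\frac{J_{t}(x)\,J_{t}(y)}{\big|\phi^{t}_{X}(x)-\phi^{t}_{X}(y)\big|^{3+s}},
\]
with $J_{t}=\det D\phi^{t}_{X}$. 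Now all the $t$--dependence sits in the smooth kernel $G_{t}$, so $\partial_{t}^{2}\big|_{0}\bigl(P_{s}(E_{t},B_{1})-P_{s}(E,B_{1})\bigr)=\tfrac{1}{2}\iint_{\mathcal{Q}}\big|\chi_{E}(x)-\chi_{E}(y)\big|^{2}\,\partial_{t}^{2}\big|_{0}G_{t}(x,y)\,dx\,dy$, and I would compute $\partial_{t}^{2}\big|_{0}G_{t}$ by inserting $\phi^{t}_{X}=\mathrm{id}+tX+\tfrac{t^{2}}{2}(DX)X+o(t^{2})$ and $J_{t}=1+t\operatorname{div}X+o(t)$, obtaining an explicit expression in $x-y$, $X$ and $DX$.

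\emph{Passing from volume to surface integrals, and concluding.} The core step is then to turn the volume double integral $\iint_{\mathcal{Q}}|\chi_{E}(x)-\chi_{E}(y)|^{2}(\cdots)\,dx\,dy$ into a surface double integral over $\partial E\times\partial E$. Since $|\chi_{E}(x)-\chi_{E}(y)|^{2}$ is supported on pairs separated by $\partial E$, I would slice $\R^{3}$ near $\partial E$ via the Euclidean coarea formula in the signed-distance variable, using the $C^{2}$ regularity of $\partial\Sigma$ away from $0$ to control the singular kernel $|x-y|^{-3-s}$ and the principal values that appear. In this reduction the part of $G_{t}$ odd in $t$ yields the first variation $\int_{\partial E}H_{s}[E]\,(X\cdot\nu_{\Sigma})\,dH^{2}$, where $H_{s}[E]$ is the nonlocal mean curvature, and this vanishes since $\partial\Sigma$ is stationary away from $0$ and $X$ is supported there; similarly the contributions of the acceleration term $(DX)X$, of the second-order part of $J_{t}$, and of any tangential part of $X$ reduce to further multiples of $H_{s}[E]$ and also vanish. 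What survives is exactly
\[
\partial_{t}^{2}\big|_{0}P_{s}(\phi^{t}_{X}(\Sigma),B_{1})=\iint_{\partial\Sigma\times\partial\Sigma}\frac{\big|\zeta(x)-\zeta(y)\big|^{2}}{|x-y|^{3+s}}\,dH^{2}(x)\,dH^{2}(y)-\int_{\partial\Sigma}c^{2}_{s,\partial\Sigma}(x)\,|\zeta(x)|^{2}\,dH^{2}(x).
\]
Since the first variation vanishes, $t^{-2}\bigl(P_{s}(\phi^{t}_{X}(\Sigma),B_{1})-P_{s}(\Sigma,B_{1})\bigr)\to\tfrac{1}{2}\partial_{t}^{2}\big|_{0}P_{s}(\phi^{t}_{X}(\Sigma),B_{1})$, and by \eqref{stabilityweak} this limit is $\ge 0$, which is the asserted inequality.

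\emph{Main difficulty.} I expect the hard part to be precisely this last passage: carrying out the volume-to-surface reduction for the barely convergent double integrals while tracking every principal-value cancellation, and verifying that all non-geometric terms are proportional to the nonlocal mean curvature --- hence annihilated by stationarity --- so that only the Gagliardo-type term and the ``squared nonlocal second fundamental form'' term $c^{2}_{s,\partial\Sigma}$ remain. This is exactly the point at which the $C^{2}$ hypothesis on $\partial\Sigma$ away from $0$ is needed, and it constitutes the technical content of \cite{FFFMM,DDPW}, which we may invoke here.
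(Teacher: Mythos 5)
Your proposal is correct and follows essentially the same route as the paper: the result is obtained by testing the stability condition of Definition \ref{defstablecone} with a normal field $X=\zeta\nu$ (after a harmless rescaling using the cone structure and the homogeneity of $P_s$ to place the support in $B_1\setminus\{0\}$), and then invoking the second variation formula of \cite{FFFMM,DDPW} for $C^2$ boundaries, exactly as the paper does in Remark \ref{equivalencedefs}, with the odd-in-$t$ and tangential contributions killed by stationarity. The only difference is presentational --- you sketch how that formula is derived before citing it --- so there is nothing to add beyond noting that the factor-of-two normalization of the second variation is immaterial for the sign of the inequality.
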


\begin{rem} \label{equivalencedefs}
Theorem \ref{propFFFMM} is an application (to the case of cones in $\R^3$) of a second variation formula found in \cite{FFFMM, DDPW}  for sets $E\subset \R^n$ with $C^2$ boundaries.
Namely, if $X$ is a smooth vector field and $\partial E$ is $C^2$ we have
\begin{eqnarray}
&\hspace{-65mm}\lim_{t\to 0} \frac{1}{t^2} \big( P_s(\phi_X^t(E),  \Omega)- P_s(E, \Omega)\big)  =\label{stability11}\\
&\hspace{3mm}=
\displaystyle \iint_{\partial E\times\partial E}  \frac{	\big|\zeta(x)-\zeta(y)\big|^2 }{|x-y|^{n+s}} \,dH^{n-1}(x)\,dH^{n-1}(y) - \int_{\partial \Sigma} c^2_{s, \partial \Sigma}(x) |\zeta(x)|^2 \,dH^{n-1}(x),\nonumber
\end{eqnarray}
where $\zeta=X\cdot \nu_E$. A standard approximation argument then shows that  \eqref{stability11} holds for all $\zeta$ Lipschitz and compactly supported on $\partial E$.

Using this formula we can show that, in the class of $C^2$ sets, the  two notions  of stability in Definitions \ref{stability} and \ref{defweak} are equivalent. Indeed, when $\partial E$ is $C^2$ we have 
\[
\begin{split}
&\liminf_{t\to 0} \frac{1}{t^2} \big( P_s(\phi_X^t(E)\cap E,  \Omega)- P_s(E, \Omega)\big)  =\\
&\hspace{3mm}= 
\iint_{\partial \Sigma\times\partial \Sigma}  \frac{	\big|\zeta^-(x)-\zeta^-(y)\big|^2 }{|x-y|^{n+s}} \,dH^{n-1}(x)\,dH^{n-1}(y) - \int_{\partial \Sigma} c^2_{s, \partial \Sigma}(x) |\zeta^-(x)|^2 \,dH^{n-1}(x),
\end{split}
\]
where $\zeta^-$ denotes the negative part of  $\zeta=X\cdot \nu_E$. The same holds with $\cap$ replaced by $\cup$ and the negative part replaced by the positive part. From these observations, it follows that  the stronger notion of stability (Definition \ref{stability}) holds whenever the weaker definition of stability (Definition \ref{defweak}) holds. 
\end{rem}

We recall now the precise dependence on the power $\sigma$ as $\sigma \uparrow 1$ in the definition of the fractional Laplacian in $\R^d$:
$$(-\Delta)^\sigma \zeta(x) =c_{d,\sigma}\int_{\R^d}\frac{\zeta(x)-\zeta(y)}{|x-y|^{d+2\sigma}}\,dy,$$
where   
\begin{equation}\label{1-s}c_{d,\sigma}=2^{2\sigma} \pi^{-d/2}\frac{\Gamma(d/2+\sigma)}{-\Gamma(-\sigma)}=2^{2\sigma} \pi^{-d/2}\frac{\Gamma({d/2+\sigma)}}{\Gamma(2-\sigma)}\sigma (1-\sigma).\end{equation}
In particular,  we observe that, up to a positive multiplicative constant, $c_{d,\sigma}$ behaves like $1-\sigma$ as $\sigma \uparrow 1$. Note also that integration by parts yields

%
\begin{equation}\label{fractLapl}\begin{split}
\int_{\R^d} \zeta(x)(-\Delta)^\sigma \zeta(x)\,dx &=c_{d,\sigma}\int_{\R^d}\zeta(x)\left(\int_{\R^d}\frac{\zeta(x)-\zeta(y)}{|x-y|^{d+2\sigma}}\,dy\right)\,dx\\
&=\frac{c_{d,\sigma}}{2}\int_{\R^d}\int_{\R^d}\frac{|\zeta(x)-\zeta(y)|^2}{|x-y|^{d+2\sigma}}\,dx\,dy.
\end{split}\end{equation}

Our proof requires the knowledge of the behaviour as $\sigma\uparrow 1$ of the best constant in the Hardy-Rellich inequality involving the $H^{\sigma}$ seminorm---see\footnote{
Note that there is a typo in the expression for the optimal constant in formula (1.6) in \cite{FS}. 
} for instance \cite{FS}. We will also use the fact that the inequality is (almost) saturated by radial $C^\infty_c(\R^d\setminus\{0\})$ functions. 
That radial  functions saturate the inequality  is proved in Section 3.3 of \cite{FS}.  Moreover,  by a standard approximation argument, we can choose these radial functions to be smooth and identically zero in a neighborhood of the origin, since points have zero $H^{\sigma}$ capacity in $\R^d$ for $d\ge 2$.
\begin{thm}[see \cite{FS}]\label{optctthardy}
Given $d\ge 2$ and $0<\sigma<d/2$, the inequality
\[
   \mathcal H_{d,\sigma} \int_{\R^d} \frac{|u(x)|^2} {|x|^{2\sigma}}\,dx \leq \int_{\R^d} u(x) (-\Delta)^{\sigma} u(x)\,dx  
\]
holds for every $u\in H^{\sigma}(\R^d)$ with optimal constant
\begin{equation}\label{costantH}
\mathcal H_{d,\sigma} = 2^{2\sigma} \frac{\Gamma^2\big( d/4+\sigma/2\big)}{\Gamma^2\big( d/4-\sigma/2\big)} =  2^{2\sigma-2}(d/2-\sigma)^2 \frac{\Gamma^2\big( d/4+\sigma/2\big)}{\Gamma^2\big( d/4-\sigma/2 +1\big)},
\end{equation}
where $\Gamma$ is the Gamma function.

Moreover, for every $\epsilon >0$ there exists a nontrivial (not identically zero) radial function $\zeta = \zeta(|x|) \in C^\infty_c(\R^d\setminus\{0\})$ such that
\[
\int_{\R^d} \zeta(x) (-\Delta)^{\sigma} \zeta(x)\,dx  \le (\mathcal H_{d,\sigma} +\epsilon) \int_{\R^d} \frac{|\zeta(x)|^2} {|x|^{2\sigma}}\,dx.
\]
\end{thm}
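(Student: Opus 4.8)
The plan is to deduce Theorem~\ref{optctthardy} from the known sharp fractional Hardy--Rellich inequality together with the spherical-harmonics decomposition of the Dirichlet form associated to $(-\Delta)^\sigma$. First I would recall (or rederive) the exact value of the optimal constant. The cleanest route is via the Herbst/Yafaev-type identity: for the power weight $|x|^{-2\sigma}$, the quadratic form $\int_{\R^d} u(-\Delta)^\sigma u$ restricted to the radial sector diagonalizes under the Mellin transform (equivalently, the substitution $u(x)=|x|^{-(d-2\sigma)/2}v(\log|x|)$ turns the problem into a Fourier multiplier on $\R$), and the symbol is an explicit ratio of Gamma functions whose infimum is attained as the Mellin variable tends to the bottom of the spectrum. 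This yields
\[
\mathcal H_{d,\sigma} = 2^{2\sigma}\,\frac{\Gamma^2\big(d/4+\sigma/2\big)}{\Gamma^2\big(d/4-\sigma/2\big)},
\]
and one checks the second displayed form in \eqref{costantH} by applying the functional equation $\Gamma(z+1)=z\,\Gamma(z)$ with $z = d/4-\sigma/2$, which produces the factor $2^{-2}(d/2-\sigma)^2$. This is the form that makes the limit $\sigma\uparrow 1$ transparent: as $\sigma\uparrow 1$ with $d=2$ the prefactor $(d/2-\sigma)^2=(1-\sigma)^2$ vanishes quadratically while the Gamma ratio stays bounded away from $0$ and $\infty$, so $\mathcal H_{2,\sigma}\sim C(1-\sigma)^2$; this is precisely the rate that will be exploited later in Section~4. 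I would record this asymptotic explicitly.

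For the optimality and, more importantly, the fact that radial functions (almost) saturate the inequality, the key point is that both sides of the Hardy inequality decompose orthogonally along spherical harmonics. Writing $u(x)=\sum_{k\ge 0} u_k(r)\,Y_k(\theta)$, one has $\int_{\R^d}u(-\Delta)^\sigma u = \sum_k \|u_k\|_{H^\sigma_k}^2$ for appropriate one-dimensional forms, and the weighted $L^2$ norm splits the same way; moreover the $k$-dependent Hardy constant $\mathcal H_{d,\sigma,k}$ is nondecreasing in $k$, so the infimum over all $u$ equals the infimum over the radial sector $k=0$. Hence the constant $\mathcal H_{d,\sigma}$ is already the best constant among radial functions, and near-optimizers can be taken radial. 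This is exactly the content of Section~3.3 of \cite{FS}, which I would cite for the precise statement.

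It then remains to upgrade a radial near-optimizer in $H^\sigma(\R^d)$ to one in $C^\infty_c(\R^d\setminus\{0\})$. Here the plan is a two-step approximation: first truncate and mollify away from the origin and near infinity to get a radial $C^\infty_c(\R^d\setminus\{0\})$ function, using that such functions are dense in $H^\sigma$ for the purposes of the two quadratic forms involved. The only delicate point is removing the origin, and this is where the capacity remark enters: since $\sigma<d/2$, singletons have zero $H^\sigma$-capacity in $\R^d$ (for $d\ge 2$), so there exist radial cutoffs $\eta_\varepsilon$, equal to $0$ near $0$ and $1$ outside a small ball, with $\|u - \eta_\varepsilon u\|_{H^\sigma}\to 0$; the weighted term $\int |x|^{-2\sigma}|u-\eta_\varepsilon u|^2$ also tends to $0$ by dominated convergence provided the original near-optimizer is chosen with enough decay of $\int_{B_\rho}|x|^{-2\sigma}|u|^2$ as $\rho\downarrow 0$, which one can arrange since the integral is finite. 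Both quadratic forms are continuous on $H^\sigma$ (the weighted one by the Hardy inequality itself), so the ratio is stable under these approximations, and choosing $\varepsilon$ small enough gives the desired $\zeta\in C^\infty_c(\R^d\setminus\{0\})$ with ratio below $\mathcal H_{d,\sigma}+\epsilon$. The main obstacle, such as it is, is bookkeeping the convergence of the \emph{weighted} term through the cutoff at the origin; everything else is either a citation to \cite{FS} or a routine mollification argument.
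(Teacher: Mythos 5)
Your proposal is correct and follows essentially the same route as the paper, which does not prove this statement from scratch but cites \cite{FS} for the sharp constant and for the fact (Section 3.3 there) that radial functions saturate the inequality, and then invokes exactly the approximation you describe: smooth radial functions vanishing near the origin suffice because points have zero $H^{\sigma}$ capacity for $\sigma<d/2$, $d\ge 2$. Your extra details (the Mellin/Herbst derivation of $\mathcal H_{d,\sigma}$, the identity $\Gamma(z+1)=z\Gamma(z)$ giving the second form of \eqref{costantH} and the $(1-\sigma)^2$ rate, and the continuity of the weighted term via the Hardy inequality itself) are accurate and consistent with how the paper uses the result.
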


Next, we rewrite in polar coordinates the last inequality of the theorem, for $d=2$. 

 
\begin{cor} \label{corhardy}
Let $\sigma \in (1/2, 1)$. There exists a radial function $\zeta \in  C^\infty_c\big( (0,+\infty)\big)$, $\zeta \not\equiv 0$, such that
\[
I[\zeta,\sigma] \le C(1-\sigma) J[\zeta,\sigma]
\]
where
\begin{equation}\label{Izeta}
I[\zeta, \sigma] := \int_0^\infty  dr\, r^{1-2\sigma} \int_{1}^\infty  d\tau \, \tau  \big|\zeta(r)-\zeta(r\tau)\big|^2   \iint_{S^1\times S^1} \frac{dH^1(\hat X)   dH^1(\hat Y) }{|\hat X-\tau \hat Y|^{2+2\sigma}},
\end{equation}
\begin{equation}\label{Jzeta}
J[\zeta, \sigma] :=  \int_0^\infty dr\,  r^{1-2\sigma}  |\zeta(r)|^2,
\end{equation}
and  $C$ is a universal constant (in particular, independent of $\sigma$).
\end{cor}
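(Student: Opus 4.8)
The plan is to rewrite the last inequality of Theorem~\ref{optctthardy} in polar coordinates in $\R^2$, and then to bound the ratio of the two explicit constants $\mathcal H_{2,\sigma}$ and $c_{2,\sigma}$ as $\sigma\uparrow 1$. First I would apply Theorem~\ref{optctthardy} with $d=2$ and with $\epsilon:=\mathcal H_{2,\sigma}$ (the reason for this specific choice of $\epsilon$ will be clear at the end): this produces a nontrivial radial function $\zeta=\zeta(|x|)\in C^\infty_c(\R^2\setminus\{0\})$, which I identify with its radial profile $\zeta\in C^\infty_c((0,+\infty))$, satisfying
\[
\int_{\R^2}\zeta(x)(-\Delta)^{\sigma}\zeta(x)\,dx\;\le\;2\,\mathcal H_{2,\sigma}\int_{\R^2}\frac{|\zeta(x)|^2}{|x|^{2\sigma}}\,dx.
\]
By the integration by parts identity \eqref{fractLapl} with $d=2$, the left-hand side equals $\tfrac{c_{2,\sigma}}{2}\iint_{\R^2\times\R^2}\frac{|\zeta(x)-\zeta(y)|^2}{|x-y|^{2+2\sigma}}\,dx\,dy$, a finite quantity since $\zeta$ is smooth with compact support and $2\sigma<2$.

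Next I would pass to polar coordinates, writing $x=r\hat X$, $y=\rho\hat Y$ with $r,\rho>0$ and $\hat X,\hat Y\in S^1$, so that $dx=r\,dr\,dH^1(\hat X)$ and $dy=\rho\,d\rho\,dH^1(\hat Y)$. The integrand $\frac{|\zeta(r)-\zeta(\rho)|^2}{|r\hat X-\rho\hat Y|^{2+2\sigma}}\,r\rho$ is symmetric under the exchange $(r,\hat X)\leftrightarrow(\rho,\hat Y)$ and vanishes on the diagonal $\{\rho=r\}$, so the integral over $\{r,\rho>0\}$ is twice the integral over $\{0<r<\rho\}$. On this region I substitute $\rho=r\tau$ with $\tau\in(1,+\infty)$, $d\rho=r\,d\tau$, and use the homogeneity $|r\hat X-r\tau\hat Y|=r\,|\hat X-\tau\hat Y|$ (here $|\hat X-\tau\hat Y|>0$ whenever $\tau>1$, so the inner integral over $S^1\times S^1$ is finite). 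Collecting the powers of $r$, namely $r\cdot r\tau\cdot r\cdot r^{-(2+2\sigma)}=r^{1-2\sigma}\tau$, one obtains exactly
\[
\iint_{\R^2\times\R^2}\frac{|\zeta(x)-\zeta(y)|^2}{|x-y|^{2+2\sigma}}\,dx\,dy=2\,I[\zeta,\sigma],
\]
with $I[\zeta,\sigma]$ as in \eqref{Izeta}, while the right-hand side reduces at once to $\int_{\R^2}\frac{|\zeta(x)|^2}{|x|^{2\sigma}}\,dx=2\pi\,J[\zeta,\sigma]$ with $J[\zeta,\sigma]$ as in \eqref{Jzeta}, since $H^1(S^1)=2\pi$. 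Combining, I arrive at $c_{2,\sigma}\,I[\zeta,\sigma]\le 4\pi\,\mathcal H_{2,\sigma}\,J[\zeta,\sigma]$.

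It then remains to check that $\mathcal H_{2,\sigma}/c_{2,\sigma}\le C(1-\sigma)$ for a universal constant $C$ and all $\sigma\in(1/2,1)$. This I would do directly from the explicit formulas: taking the second expression for $\mathcal H_{d,\sigma}$ in \eqref{costantH} with $d=2$, which carries the factor $(d/2-\sigma)^2=(1-\sigma)^2$, and the factorization $c_{2,\sigma}=2^{2\sigma}\pi^{-1}\frac{\Gamma(1+\sigma)}{\Gamma(2-\sigma)}\sigma(1-\sigma)$ from \eqref{1-s}, a short computation gives
\[
\frac{\mathcal H_{2,\sigma}}{c_{2,\sigma}}=(1-\sigma)\,\frac{\pi}{4}\,\frac{\Gamma^2(1/2+\sigma/2)\,\Gamma(2-\sigma)}{\Gamma^2(3/2-\sigma/2)\,\Gamma(1+\sigma)\,\sigma},
\]
and the last fraction is a continuous, strictly positive function of $\sigma$ on the closed interval $[1/2,1]$ (every Gamma argument stays in a fixed compact subset of $(0,+\infty)$), hence bounded above by a universal $C$. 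Substituting this bound into $c_{2,\sigma}\,I[\zeta,\sigma]\le 4\pi\,\mathcal H_{2,\sigma}\,J[\zeta,\sigma]$ yields $I[\zeta,\sigma]\le 4\pi C(1-\sigma)\,J[\zeta,\sigma]$, which is the assertion. I expect the main (though modest) obstacle to be the bookkeeping in the polar change of variables --- keeping track of the powers of $r$, and justifying the symmetrization and the finiteness of the spherical kernel --- together with the point that $\epsilon$ in Theorem~\ref{optctthardy} must be taken comparable to $\mathcal H_{2,\sigma}$: since $\mathcal H_{2,\sigma}$ is itself of order $(1-\sigma)^2$ while $c_{2,\sigma}$ is only of order $(1-\sigma)$, a fixed $\epsilon$ would destroy the $(1-\sigma)$ gain and produce merely $I[\zeta,\sigma]\le C(1-\sigma)^{-1}J[\zeta,\sigma]$.
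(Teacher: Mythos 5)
Your proof is correct and follows essentially the same route as the paper: choose $\epsilon$ of order $(1-\sigma)^2$ in Theorem~\ref{optctthardy}, use identity \eqref{fractLapl}, pass to polar coordinates with the substitution $t=r\tau$, and exploit that $\mathcal H_{2,\sigma}\sim(1-\sigma)^2$ while $c_{2,\sigma}\sim(1-\sigma)$. The only differences are cosmetic (you symmetrize to get the exact identity $2I[\zeta,\sigma]$ where the paper simply restricts to $\{|x|\le|y|\}$, and you bound the ratio $\mathcal H_{2,\sigma}/c_{2,\sigma}$ by an explicit Gamma-function computation rather than the paper's cruder asymptotic bounds), and both are fine.
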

\begin{proof}
First, we observe that the best constant in Theorem \ref{optctthardy} for $d=2$ satisfies $\mathcal H_{2,\sigma} \le C(1-\sigma)^2$ (where $C$ is a positive universal constant) as one can see from the last expression in \eqref{costantH}.

Combining equality \eqref{fractLapl}, where $c_{d,\sigma}$ is given by \eqref{1-s},
and the second inequality of Theorem \ref{optctthardy} with the choice $\epsilon = C(1-\sigma)^2$, we deduce that there is a radial function $\zeta= \zeta(|x|)\in C^\infty_c(\R^d\setminus\{0\})$, with $\zeta \not\equiv 0$, satisfying
\begin{equation}\label{eq*}
 C (1-\sigma)^2 \int_{\R^2} \frac{|\zeta(|x|)|^2} {|x|^{2\sigma}} \,dx \ge (1-\sigma) \iint_{\R^2\times \R^2} \frac{\big|\zeta(|x|)-\zeta(|y|)\big|^2}{|x-y|^{2+2\sigma}}\,dx\,dy.
\end{equation}

Finally, we use polar coordinates  $x = r\hat X$, $y= t\hat Y$ where $r,t\in (0,+\infty)$ and $\hat X, \hat Y \in S^1$, and we integrate only in the set $\{|x|\leq |y|\}$ on the right hand side of \eqref{eq*}, to get
\begin{equation*} \begin{split}
&C(1-\sigma) \int_0^\infty dr\, r^{1-2\sigma}   |\zeta(r)|^2 \\
&\hspace{2em}\ge  \int_0^\infty dr\,  r  \int_{r}^\infty  dt\, t \int_{S^1} dH^1(\hat X) \int_{S^1} dH^1(\hat Y) \frac{ \big|\zeta(r)-\zeta(t)\big|^2 }{r^{2+2\sigma}\left|\hat X-\frac{t}{r} \hat Y\right|^{2+2\sigma}}\\
& \hspace{2em}=
\int_0^\infty dr\,  r^{1-2\sigma}  \int_{1}^\infty  d\tau \,\tau \int_{S^1} dH^1(\hat X) \int_{S^1} dH^1(\hat Y) \frac{ \big|\zeta(r)-\zeta(r\tau)\big|^2 }{|\hat X-\tau \hat Y|^{2+2\sigma}},
\end{split}
\end{equation*}
where in the last equality we have used the change of variables $t=r\tau$.
This concludes the proof of the corollary.
\end{proof}

Finally, we state the estimate established in \cite{CSV} for the classical perimeter of stable sets for the $s$-perimeter, keeping track on how the constant in the estimate blows up as $s\uparrow1$. 

\begin{thm}[\cite{CSV}] \label{perestimate}
Let $E\subset \R^n$ be a stable set for the $s$-perimeter in $B_r(z)$, where $z\in \R^n$, $r>0$. Assume that $\partial E$ is $C^2$ in that ball.

Then,
\[{\rm Per}_{B_{r/2}(z)}(E)  \le \frac{C}{1-s} \,r^{n-1},\]
where ${\rm Per}_{B_{r/2}}(E)$ denotes the relative (classical) perimeter of $E$ in $B_{r/2}(z)$ and $C=C(n,s)$ is bounded as $s\uparrow 1$.
\end{thm}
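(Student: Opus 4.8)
The statement to prove is Theorem \ref{perestimate}, the universal BV (perimeter) estimate for $C^2$ stable sets, with explicit control of the constant as $s\uparrow 1$. Since the paper attributes this to \cite{CSV} and is merely recalling it, I would present the proof as a reduction to the key estimate of that paper, checking the $s\uparrow 1$ dependence of the constants along the way. The plan is as follows. First, by scaling and translation, reduce to the case $z=0$, $r=1$: if $E$ is stable for $P_s$ in $B_1$, then $E_{r,z}:=(E-z)/r$ is stable for $P_s$ in $B_r(z)$, and the classical perimeter scales as $\mathrm{Per}_{B_{r/2}(z)}(E)=r^{n-1}\,\mathrm{Per}_{B_{1/2}}(E_{r,z})$ (here using that the Gagliardo kernel $|x-\bar x|^{-n-s}$ has the homogeneity that makes stability scale-invariant), so it suffices to show $\mathrm{Per}_{B_{1/2}}(E)\le C/(1-s)$ for $C^2$ sets stable in $B_1$.

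Second, recall the mechanism of \cite{CSV}: one tests the stability inequality — in the form \eqref{stability11} of Remark \ref{equivalencedefs} applied on $\Omega=B_1$, or rather the $\cap/\cup$ version — with a normal vector field $\zeta=X\cdot\nu_E$ built from a fixed cutoff, and uses that the squared nonlocal second fundamental form $c^2_{s,\partial E}$ controls, after integration against a suitable weight and an integration-by-parts/ splitting of the kernel into a ``near'' and ``far'' part, the classical perimeter density. Concretely, in \cite{CSV} the stability inequality yields a bound of the form $c_{n,s}\int_{\partial E\cap B_{1/2}} 1\,dH^{n-1}\le C\,\big(\text{Dirichlet-type energy of the cutoff}\big)$, where the constant $c_{n,s}$ coming from the lower bound on $c^2_{s,\partial E}$ (or from the normalization of the kernel) is comparable to $1-s$ as $s\uparrow 1$, exactly the phenomenon recorded in \eqref{1-s}. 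Dividing through by $c_{n,s}\sim 1-s$ produces the factor $1/(1-s)$ in the final estimate, and the remaining constant $C=C(n,s)$ stays bounded as $s\uparrow 1$ because the cutoff energy does.

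Third, I would be careful about the two points where the $C^2$ hypothesis is genuinely used: (a) it guarantees $c^2_{s,\partial E}(x)<\infty$ pointwise and that $\partial E$ has locally finite $H^{n-1}$ measure, so that all integrals in \eqref{stability11} make sense and the second variation formula of \cite{FFFMM,DDPW} is applicable; and (b) it ensures the approximation argument turning \eqref{stability11} into an inequality valid for Lipschitz compactly supported $\zeta$ on $\partial E$ goes through. The main obstacle — really the only nontrivial input — is the interpolation-type inequality from \cite{CSV} that bounds the local perimeter $\mathrm{Per}_{B_{1/2}}(E)$ from above by the right-hand side of the stability inequality with a constant having the stated blow-up rate; reproducing it in full would require the geometric manipulations of that paper, so in this ``previously known ingredients'' section I would cite \cite[Theorem 1.x]{CSV} for that step and confine the proof here to (i) the scaling reduction and (ii) the explicit verification, via \eqref{1-s}, that the constant produced is $\le C/(1-s)$ with $C$ bounded up to $s=1$.
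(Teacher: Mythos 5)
Your scaling reduction and your decision to quote \cite{CSV} for the hard step are in the spirit of the paper, whose proof is precisely an inspection of the proof of Theorem 1.7 in \cite{CSV} with the $s$-dependence of every constant made explicit. However, your description of the mechanism, and in particular of where the factor $1/(1-s)$ comes from, is not correct, and the route you outline would fail. You claim that testing the second variation formula \eqref{stability11} with a cutoff normal field yields an inequality of the form $c_{n,s}\,{\rm Per}_{B_{1/2}}(E)\le C\cdot(\text{energy of the cutoff})$ with $c_{n,s}\sim 1-s$, so that one concludes by dividing by $c_{n,s}$. This cannot work: the squared nonlocal second fundamental form $c^2_{s,\partial E}$ vanishes identically when $\partial E$ is a hyperplane, so it cannot dominate the perimeter density from below; and the right-hand side of \eqref{stability11} is a double integral over $\partial E\times\partial E$, hence it is not a ``Dirichlet energy of the cutoff'' bounded independently of $E$ --- controlling it already requires information on the (unknown) measure of $\partial E$. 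The normalization constant \eqref{1-s} of the fractional Laplacian is not the source of the blow-up in Theorem \ref{perestimate}.

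The actual proof has three ingredients that your plan omits or misattributes. First, Theorem 1.9 of \cite{CSV} gives \eqref{key}, namely ${\rm Per}_{B_1}(E)\le C\bigl(1+\sqrt{P_s(E,B_4)}\bigr)$ with $C$ depending only on $n$; note the square root (reflecting the quadratic nature of stability) and the fact that the $s$-perimeter is taken on a larger ball. Second, the Bourgain--Brezis--Mironescu type interpolation (\cite{BBM}, or Proposition 2.2 in \cite{Hitch}) gives \eqref{key1}, i.e.\ $(1-s)P_s(E,B_4)\le C\bigl({\rm Per}_{B_4}(E)+1\bigr)$ with $C$ bounded as $s\uparrow 1$: this, and not a kernel normalization, is where the factor $1-s$ enters, and it multiplies the $s$-perimeter on the right-hand side rather than the classical perimeter on the left. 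Third, combining the two via Young's inequality yields \eqref{key3}, which bounds ${\rm Per}_{B_1}(E)$ by $C\bigl(1+\frac{1}{\delta(1-s)}+\delta\bigr)+\delta\,{\rm Per}_{B_4}(E)$; since the perimeter term on the right lives on the larger ball $B_4$, it cannot simply be absorbed, and one needs the rescaling together with the abstract absorption Lemma 3.1 of \cite{CSV}, applied at every scale, to conclude ${\rm Per}_{B_1}(E)\le C/(1-s)$ and then rescale to $B_r(z)$. Without the square-root/interpolation interplay and this absorption step, the ``explicit verification via \eqref{1-s}'' you propose does not produce the stated constant.
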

\begin{proof}
The theorem follows by inspection of the proof of Theorem 1.7 in \cite{CSV}, taking into account the explicit dependence of the constants on $s$ as $s\uparrow1$.
For the sake of clarity, we write here below the crucial estimates in the proof of Theorem 1.7 in \cite{CSV}, with the precise dependence of all constants on $s$, as $s\uparrow 1$. In the sequel $C$ will denote positive constants depending only on $n$ and $s$ (possibly different ones) which remain bounded as $s\uparrow 1$.

In \cite{CSV}, Theorem 1.9 (applied to the kernel $K(z)=|z|^{-n-s}$) gives that
\begin{equation}\label{key}
{\rm Per}_{B_{1}}(E) \leq C\left(1+\sqrt{P_s(E,B_4)}\right)
\end{equation}
if $E$ is a stable set in $B_4$,
where $C$  only depends on $n$ (see  Theorem 1.9 in \cite{CSV} and the value of the constant $C_1$ given at page 6 in \cite{CSV}).
We rewrite now inequalities (3.8) and (3.9) of \cite{CSV} keeping track of the dependence on $s$ of all constants. We have
\begin{equation}\label{key1}
\begin{split}
(1-s)P_s(E,B_4)& \leq (1-s)\iint_{B_4\times B_4}\frac{|\chi_E(x)-\chi_E(y)|}{|x-y|^{n+s}}\,dx\,dy \\
&\hspace{1em} + 2(1-s)\iint_{B_4\times B_4^c}\frac{1}{|x-y|^{n+s}}\,dx\,dy\\
&\leq (1-s)\iint_{B_4\times B_4}\frac{|\chi_E(x)-\chi_E(y)|}{|x-y|^{n+s}}\,dx\,dy + C\\
&\leq C\left( {\rm Per}_{B_4}(E) +1\right),
\end{split}
\end{equation}
with $C=C(n,s)$ bounded as $s\uparrow 1$, where for the last inequality we refer to Theorem \nolinebreak  1 and Remark 5 in \cite{BBM} or to the proof of Proposition 2.2 in \cite{Hitch}.

Hence, \eqref{key}, \eqref{key1}, and Young inequality lead to
\begin{equation}\label{key3}
\begin{split}
{\rm Per}_{B_1}(E)&\leq C\left( 1+ \frac{1}{(1-s)^{1/2}} \big( 1+ {\rm Per}_{B_4}(E)\big)^{1/2}\right)\\
&\leq C\left(1+\frac{1}{\delta(1-s)} +\delta \right) + \delta {\rm Per}_{B_4}(E),
\end{split}
\end{equation}
for all $\delta>0$.

Arguing exactly as in the end of the proof of Theorem 1.7 in \cite{CSV} (that is, rescaling and using the abstract Lemma 3.1 in \cite{CSV}), we deduce that
$${\rm Per}_{B_1}(E)\leq \frac{C}{1-s}.$$
Thus, after rescaling, we conclude the statement of the theorem.
\end{proof}

\section{Bounding the squared nonlocal second fundamental form}

In this section we denote by $\gamma$ the intersection of the boundary of the cone $\partial \Sigma \subset \R^3$ and the sphere $S^2=\{x\in \R^3\, : \, |x|=1\}$. Note that $\gamma$ is a finite union of $C^2$ simple curves on $S^2$.

In the following lemma we compute the stability formula of Theorem \ref{propFFFMM} for a radial test function $\zeta=\zeta(r)$, where $r= |x|$.
\begin{lem}\label{step1}
Let $\Sigma$ be a stable cone for the $s$-perimeter in $\R^3$. Assume that $\partial \Sigma$ is $C^2$ away from $0$. Let us call $\gamma := \partial{\Sigma}\cap S^2$, where $S^2:= \{x\in \R^3\, :\, |x|=1\}$.
Then, for every $\zeta \in C^2_c((0,+\infty))$ we have
\[
 A \, J[\zeta,{\textstyle \frac{1+s}{2}}] \le  \int_0^\infty dr\,r^{-s} \int_{1}^\infty   d\tau\, \tau  \big|\zeta(r)-\zeta(r\tau)\big|^2 \iint_{\gamma\times \gamma}  \frac{ dH^1(\hat x) dH^1(\hat y)}{|\hat x-\tau \hat y|^{3+s}},
\]
where
\[
A :=  \int_{\gamma} dH^1(\hat x)  \,c^2_{s, \partial \Sigma}(\hat x)
\]
and $J[\zeta, \frac{1+s}{2}]$ is given by \eqref{Jzeta}.
\end{lem}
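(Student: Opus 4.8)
The plan is to take the second variation inequality of Theorem~\ref{propFFFMM} with the test function $\zeta(x)=\zeta(|x|)$ radial, and reduce it to an identity in polar coordinates on the cone. Writing $\partial\Sigma$ in polar coordinates via the map $(\rho,\hat x)\mapsto \rho\hat x$ with $\rho\in(0,\infty)$ and $\hat x\in\gamma=\partial\Sigma\cap S^2$, the cone's two-dimensional surface measure factors as $dH^2 = \rho\, d\rho\, dH^1(\hat x)$, since $\partial\Sigma$ is the Euclidean cone over the curve $\gamma$. So the left-hand side
\[
\int_{\partial\Sigma} c^2_{s,\partial\Sigma}(x)\,|\zeta(|x|)|^2\,dH^2(x)
\]
becomes $\int_0^\infty \rho\,|\zeta(\rho)|^2\,d\rho \int_\gamma c^2_{s,\partial\Sigma}(\rho\hat x)\,dH^1(\hat x)$ once we check the scaling of $c^2_{s,\partial\Sigma}$.

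First I would establish the homogeneity $c^2_{s,\partial\Sigma}(\rho\hat x)=\rho^{-1-s}\,c^2_{s,\partial\Sigma}(\hat x)$. This follows because $\partial\Sigma$ is a cone, so $\nu_\Sigma$ is $0$-homogeneous, i.e.\ $\nu_\Sigma(\rho y)=\nu_\Sigma(y)$; changing variables $y=\rho z$ in the defining integral
\[
c^2_{s,\partial\Sigma}(\rho\hat x)=\int_{\partial\Sigma}\frac{|\nu_\Sigma(\rho\hat x)-\nu_\Sigma(y)|^2}{|\rho\hat x-y|^{3+s}}\,dH^2(y)
\]
and using $dH^2(\rho z)=\rho^2\,dH^2(z)$ together with $|\rho\hat x-\rho z|^{3+s}=\rho^{3+s}|\hat x-z|^{3+s}$ gives the claimed factor $\rho^{2-(3+s)}=\rho^{-1-s}$. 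Hence the left-hand side equals $\big(\int_0^\infty \rho^{-s}|\zeta(\rho)|^2\,d\rho\big)\cdot\big(\int_\gamma c^2_{s,\partial\Sigma}(\hat x)\,dH^1(\hat x)\big) = J[\zeta,\tfrac{1+s}{2}]\cdot A$, recalling the definition \eqref{Jzeta} of $J$ with $2\sigma=1+s$, i.e.\ $1-2\sigma=-s$, and the definition of $A$.

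Next I would treat the right-hand side. Writing $x=r\hat x$, $y=t\hat y$ with $r,t>0$, $\hat x,\hat y\in\gamma$, we get $dH^2(x)\,dH^2(y)=r\,t\,dr\,dt\,dH^1(\hat x)\,dH^1(\hat y)$ and $|\zeta(|x|)-\zeta(|y|)|^2=|\zeta(r)-\zeta(t)|^2$, while $|x-y|^{3+s}=|r\hat x-t\hat y|^{3+s}$. By symmetry in $x\leftrightarrow y$, it suffices to integrate over $\{t\ge r\}$ and double; alternatively, one integrates over the full region and notes the stated inequality only keeps one half, which is harmless since the integrand is nonnegative (so dropping the $\{t<r\}$ part only decreases the right-hand side, preserving the inequality direction $\le$). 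On $\{t\ge r\}$ substitute $t=r\tau$, $\tau\ge1$, $dt=r\,d\tau$, and factor $|r\hat x-r\tau\hat y|^{3+s}=r^{3+s}|\hat x-\tau\hat y|^{3+s}$; collecting powers of $r$ gives $r\cdot r\tau\cdot r\cdot r^{-(3+s)}=r^{-s}\tau$, yielding exactly
\[
\int_0^\infty dr\,r^{-s}\int_1^\infty d\tau\,\tau\,|\zeta(r)-\zeta(r\tau)|^2\iint_{\gamma\times\gamma}\frac{dH^1(\hat x)\,dH^1(\hat y)}{|\hat x-\tau\hat y|^{3+s}}.
\]
Combining, $A\,J[\zeta,\tfrac{1+s}{2}]\le(\text{RHS})$, which is the claim.

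I do not expect a serious obstacle here; this is essentially a careful bookkeeping of scalings. The only mild subtlety is justifying that the test function $\zeta(|x|)$, with $\zeta\in C^2_c((0,\infty))$, indeed lies in $C^2_c(\R^3\setminus\{0\})$ (clear, since $\zeta$ vanishes near $0$ and near $\infty$, and radial smooth functions supported away from the origin are smooth on $\R^3\setminus\{0\}$) so that Theorem~\ref{propFFFMM} applies, and checking that all the integrals are finite so the change of variables and the restriction to $\{t\ge r\}$ are legitimate — finiteness of the right-hand side follows because $\zeta$ is Lipschitz with compact support away from $0$, making $|\zeta(r)-\zeta(r\tau)|$ decay and the kernel integrable.
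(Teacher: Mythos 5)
Your proposal is correct in its main line and follows essentially the same route as the paper: plug the radial test function into Theorem \ref{propFFFMM}, use the $0$-homogeneity of $\nu_\Sigma$ to get $c^2_{s,\partial\Sigma}(\rho\hat x)=\rho^{-1-s}c^2_{s,\partial\Sigma}(\hat x)$, factor the cone measure as $\rho\,d\rho\,dH^1$, and substitute $t=r\tau$; the power counting and the identification of $A\,J[\zeta,\frac{1+s}{2}]$ are all as in the paper. One caveat: your ``alternative'' justification for passing to the region $\{t\ge r\}$ is backwards. The full double integral sits on the \emph{larger} side of the stability inequality ($A\,J\le \iint_{\partial\Sigma\times\partial\Sigma}$), so discarding the $\{t<r\}$ portion shrinks the upper bound and does \emph{not} preserve the inequality; the only legitimate route is the one you state first, namely symmetry of the integrand under $(r,\hat x)\leftrightarrow(t,\hat y)$, which gives the full integral as exactly twice the restricted one and hence $A\,J\le 2R$, where $R$ is the right-hand side in the statement. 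This leaves a factor $2$ that your write-up (like the paper's own proof, which also ends with the $2$ and then drops it in the statement) silently absorbs; it is immaterial for every subsequent use, since Proposition \ref{crucial3d} and all that follows only require the estimate up to a constant $C(s)$ bounded as $s\uparrow1$, but to be literal you should either keep the $2$ on the right-hand side or replace $A$ by $A/2$.
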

\begin{proof}
We take the radial test function $\zeta = \zeta(|x|)$ in the stability inequality of Theorem \ref{propFFFMM},
and we use polar coordinates $x= r\hat x$,  $y = t\hat y$ to obtain
\[\begin{split}
\int_{\gamma} dH^1(\hat x) \int_{\gamma} dH^1(\hat y)\int_0^\infty dr\,r \int_{0}^\infty dt\,t & \frac{	\big|\zeta(r)-\zeta(t)\big|^2 }{|r\hat x-t \hat y|^{3+s}}  \ge
\\
&\ge  \int_{\gamma}dH^1(\hat x) \int_0^\infty  dr\,r \,c^2_{s, \partial \Sigma}(r\hat x) |\zeta(r)|^2.
\end{split}
\]

We observe that since $\Sigma$ is a cone, $\nu_\Sigma (r\hat x) = \nu_\Sigma (\hat x)$ for all $r>0$ and thus, denoting $\hat y=y/|y|$,
\[
\begin{split}
c^2_{s, \partial \Sigma}(r\hat x) &=  \int_{\partial \Sigma} \frac{	\big|\nu_\Sigma(\hat x)-\nu_\Sigma(\hat y)\big|^2 }{|r\hat x-y|^{3+s}} \,dH^{2}(y) =
\int_{\partial \Sigma} \frac{	\big|\nu_\Sigma(\hat x)-\nu_\Sigma(\hat z)\big|^2 }{|r\hat x-r\hat z|^{3+s}} \,r^2 \,dH^{2}(\hat z)
\\
&=  \frac{c^2_{s, \partial \Sigma}(\hat x)}{r^{1+s}}.
\end{split}
\]

Hence, using that $\partial \Sigma\times \partial \Sigma = \{|x|>|y|\}\cup \{|y|>|x|\}$ up to measure zero sets, and the symmetry of the integrand with respect to  interchanging $x,y$ we obtain
\[\begin{split}
2\int_{\gamma} dH^1(\hat x) \int_{\gamma} dH^1(\hat y)\int_0^\infty dr\,r \int_{r}^\infty dt\,t \frac{	 \big|\zeta(r)-\zeta(t)\big|^2 }{|r\hat x-t \hat y|^{3+s}}  \ge
 A \int_0^\infty \frac{dr}{r^{s}}  |\zeta(r)|^2,
\end{split}
\]
where $A =  \int_{\gamma} dH^1(\hat x)  \,c^2_{s, \partial \Sigma}(\hat x)$.

Doing the change of variables $t = r\tau$, we obtain
\[\begin{split}
& 2\int_{\gamma} dH^1(\hat x) \int_{\gamma} dH^1(\hat y)\int_0^\infty dr\,r \int_{1}^\infty d\tau \frac{r^2}{r^{3+s}} \tau  \frac{	\big|\zeta(r)-\zeta(r\tau)\big|^2 }{|\hat x-\tau \hat y|^{3+s}}  \\
&\hspace{4em}\ge
 A \int_0^\infty \frac{dr}{r^s} |\zeta(r)|^2,
\end{split}
\]
and the lemma follows recalling the definition of $J[\zeta, \frac{1+s}{2}]$ in \eqref{Jzeta}.
\end{proof}

The following lemma allows to estimate the integral on $\gamma\times \gamma$, appearing in the previous lemma, by the integral on $S^1\times S^1$ that appears in $I[\zeta, \sigma]$  of Corollary \ref{corhardy} for $\sigma= \frac{1+s}{2}$. Here it is crucial to use in every ball $B_r(\hat x)$, with $\hat x\in \gamma$ and $r\in(0,1/2)$ the universal perimeter estimate from Theorem \ref{perestimate}.
\begin{lem}\label{lemcompareS1}
Let $\Sigma$ be a stable cone for the $s$-perimeter in $\R^3$, of class $C^2$ away from $0$. Let us call $\gamma := \partial{\Sigma}\cap S^2$, where $S^2:= \{x\in \R^3\, :\, |x|=1\}$.

Then, for all   $\tau >1$ we have
\[
\iint_{\gamma\times\gamma}  \frac{dH^1(\hat x)  dH^1(\hat y)}{|\hat x -\tau\hat y|^{3+s} } \le \frac{C H^1(\gamma)}{1-s}
\int_{S^1\times S^1}  \frac{dH^1(\hat X)  dH^1(\hat Y)}{|\hat X - \tau\hat Y|^{3+s} },
\]
where $S^1:= \{X\in \R^2\, :\, |X|=1\}$ and $C= C(s)$ is bounded as $s\uparrow 1$.
\end{lem}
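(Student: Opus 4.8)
The plan is to reduce the double integral over the curved cone $\gamma \times \gamma \subset S^2 \times S^2$ to the double integral over the flat circle $S^1 \times S^1 \subset \R^2 \times \R^2$ by localizing with a covering argument, comparing $\gamma$ locally with a flat arc, and controlling the local length of $\gamma$ by the universal BV estimate of Theorem \ref{perestimate}. The key difficulty is a matter of scale: for $\tau$ close to $1$ the integrand is singular along the diagonal $\hat x = \tau\hat y$, so a direct "replace $\gamma$ by a tangent line" estimate loses constants that blow up; the honest way is to split the integral into a near-diagonal part and a far-diagonal part, and in each part extract the right power of the relevant length scale.

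First I would record the elementary geometric facts. For $\hat x, \hat y \in S^2$ one has $|\hat x - \tau\hat y|^2 = 1 + \tau^2 - 2\tau\,\hat x\cdot\hat y = (\tau-1)^2 + 2\tau(1 - \hat x\cdot\hat y) = (\tau - 1)^2 + \tau\,|\hat x - \hat y|^2$, and the identical identity holds on $S^1$. Thus on both spheres the quantity $|\hat x - \tau\hat y|$ depends only on $\tau$ and on the chordal (equivalently geodesic, up to a universal factor) distance between the two points; so the comparison between the $\gamma\times\gamma$ integral and the $S^1\times S^1$ integral is really a comparison of the two \emph{measures} $H^1\!\restriction_\gamma$ and $H^1\!\restriction_{S^1}$ pushed forward to distance space. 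Concretely I would show: for every $\hat x \in \gamma$ and every $\rho \in (0, 1/2)$,
\[
H^1\big(\gamma \cap B_\rho(\hat x)\big) \le \frac{C}{1-s}\,\rho,
\]
which follows by applying Theorem \ref{perestimate} in the ball $B_\rho(\hat x)$ — note $\Sigma$ is stable there with $C^2$ boundary, $n = 3$, and $\mathrm{Per}_{B_{\rho/2}(\hat x)}(\Sigma) = H^1(\gamma \cap B_{\rho/2}(\hat x))$ up to the cone's radial contributions, which at unit distance from the origin and at scale $\rho \le 1/2$ are comparable to the spherical length (a routine coarea/cone computation gives $H^1(\gamma \cap B_{\rho/2}(\hat x)) \le C\,\mathrm{Per}_{B_{\rho/2}(\hat x)}(\Sigma)$). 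Rescaling Theorem \ref{perestimate} from scale $1$ to scale $\rho$ gives the displayed bound with $C$ bounded as $s \uparrow 1$.

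Next I would dyadically decompose. Write, for fixed $\tau > 1$ and $\delta := \tau - 1 > 0$,
\[
\iint_{\gamma\times\gamma} \frac{dH^1(\hat x)\,dH^1(\hat y)}{|\hat x - \tau\hat y|^{3+s}}
= \iint_{\{|\hat x - \hat y| \le \delta\}} + \iint_{\{|\hat x - \hat y| > \delta\}},
\]
and on the second piece sum over annuli $2^{j}\delta < |\hat x - \hat y| \le 2^{j+1}\delta$. Using $|\hat x - \tau\hat y|^2 \ge \max\{\delta^2,\ \tau|\hat x - \hat y|^2\} \ge \tfrac12(\delta^2 + |\hat x - \hat y|^2)$ together with the local length bound above (to estimate, for each fixed $\hat x$, the $\hat y$-measure of each annulus by $\frac{C}{1-s}\,2^{j}\delta$ when $2^{j+1}\delta \le 1/2$, and by the crude global bound $H^1(\gamma)$ otherwise), and then integrating in $\hat x$ (which produces a global factor $H^1(\gamma)$), one obtains
\[
\iint_{\gamma\times\gamma} \frac{dH^1(\hat x)\,dH^1(\hat y)}{|\hat x - \tau\hat y|^{3+s}}
\le \frac{C\,H^1(\gamma)}{1-s}\,\Phi_s(\delta),
\]
where $\Phi_s(\delta)$ is the "reference" one-dimensional sum $\sum_{j} (2^j\delta)(2^j\delta + \delta)^{-(3+s)} \asymp \delta^{-(2+s)}$ for the near and moderate scales, plus an $O(1)$ remainder from the scales beyond $1/2$. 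The point is that performing exactly the same dyadic decomposition for the flat integral $\iint_{S^1\times S^1}|\hat X - \tau\hat Y|^{-(3+s)}\,dH^1\,dH^1$ — where now $H^1(S^1) = 2\pi$ and the annuli have length comparable to $2^j\delta$ \emph{from below as well}, since $S^1$ is a genuine curve — gives the matching lower bound
\[
\iint_{S^1\times S^1} \frac{dH^1(\hat X)\,dH^1(\hat Y)}{|\hat X - \tau\hat Y|^{3+s}} \ge c\,\Phi_s(\delta).
\]
Combining the two displays yields the claim. The main obstacle, as flagged, is bookkeeping the $\delta = \tau - 1 \to 0^+$ regime so that the blow-up rates $\delta^{-(2+s)}$ on the two sides \emph{cancel} and only the harmless factor $\frac{C\,H^1(\gamma)}{1-s}$ survives; the identity $|\hat x - \tau\hat y|^2 = (\tau-1)^2 + \tau|\hat x - \hat y|^2$, valid verbatim on $S^2$ and on $S^1$, is exactly what makes this cancellation work, and the universal length bound from Theorem \ref{perestimate} is exactly what replaces the missing smoothness/flatness of $\gamma$.
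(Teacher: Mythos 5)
Your proposal is correct and follows essentially the same route as the paper: the localized length bound $H^1(\gamma\cap B_\rho(\hat x))\le \frac{C}{1-s}\rho$ from Theorem \ref{perestimate}, a dyadic decomposition in $|\hat x-\hat y|$ combined with the identity $|\hat x-\tau\hat y|^2=(\tau-1)^2+\tau|\hat x-\hat y|^2$, and a matching lower bound for the same dyadic sum on $S^1$. The only cosmetic difference is that the paper fixes $\hat x\in\gamma$ and $\hat X\in S^1$ and compares the two inner integrals pointwise before integrating, whereas you compare both sides to a common reference quantity $\Phi_s(\tau-1)$; the substance is identical.
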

\begin{proof}
Applying Theorem \ref{perestimate} to the stable cone $\Sigma$, we obtain that,  for all $\hat x \in \gamma$ and $r\in(0,1/2)$, we have
\begin{equation}\label{estimateper}
 H^1\big( \gamma\cap  B_{r}(\hat x)\big)  \le \frac{C}{1-s} \,r,
 \end{equation}
where $C$ denotes a constant depending only on $s$ which is bounded as $s\uparrow 1$.
In particular, by a covering argument we obtain $H^1( \gamma) \le C/(1-s)$.

We now take any couple of points $\hat x\in \gamma$ and $\hat X\in S^1:= \{X\in \R^2\, :\, |X|=1\}$. Let us show that, for all $\tau>1 $,
\begin{equation}\label{blabla1}
 \int_{\gamma} dH^1(\hat y)\,  \frac{1}{|\hat x -\tau\hat y|^{3+s} } \le \frac{C}{1-s}     \int_{S^1} dH^1(\hat Y)\,  \frac{1}{|\hat X -\tau\hat Y|^{3+s} }.
\end{equation}
Indeed, we use a dyadic ring decomposition
\[
\gamma\setminus\{\hat x\} = \bigcup_{-\infty\le k \le {1}} \mathcal A_k \quad \mbox{where }\mathcal A_k = \gamma \cap \big( B_{2^k}(\hat x)\setminus B_{2^{k-1}}(\hat x)\big).
\]
Using \eqref{estimateper} we obtain
\[
 H^1\big( \mathcal A_k\big) \le  \frac{C}{1-s}\, 2^k.
\]
Then, using that
\[
|\hat x -\tau \hat y|^2  = 1+ \tau^2 -2\tau\hat x \cdot \hat y = (\tau-1)^2 + 2\tau (1- \hat x \cdot \hat y)
\]
and that $2^{-3}2^{2k}\leq 2^{-1}|\hat x-\hat y|^2=1-\hat x\cdot \hat y\leq 2^{-1}2^{2k}$ for $y \in \mathcal A_k$, we obtain
\[
\begin{split}
 \int_{\gamma} dH^1(\hat y)\,  \frac{1}{|\hat x -\tau\hat y|^{3+s} } & = \sum_{-\infty\le k\le {1}} \int_{\mathcal A_k} dH^1(\hat y)\,  \frac{1}{|\hat x -\tau\hat y|^{3+s} }
\\
&\le \sum_{-\infty\le k\le {1}} H^1(\mathcal A_k)  \frac{C}{\big( (\tau-1)^2 + \tau 2^{2k} \big)^{\frac{3+s}{2}} }
\\
& \le \sum_{-\infty\le k\le {1}}   \frac{C}{1-s}\, 2^k \frac{1}{\big( (\tau-1)^2 + \tau 2^{2k} \big)^{\frac{3+s}{2}} }
\\
&\le  \frac{C}{1-s}  \int_{S^1} dH^1(\hat Y)\,  \frac{1}{|\hat X -\tau\hat Y|^{3+s} },
\end{split}
\]
where the last inequality follows from the previous considerations applied with $(\Sigma, \gamma)$ replaced by $(\R^3_+, S^1)$.

The lemma then follows integrating  \eqref{blabla1} with respect to $\hat x$ and $\hat X$.
\end{proof}

We can now give the proof of the key integral estimate on $\gamma$ of the squared nonlocal second fundamental form of $\partial \Sigma$.

Let us compute $c^2_{s, \partial \Sigma}(\hat x)$ in terms of only the trace $\gamma = \partial\Sigma\cap S^2$. Recall that $c^2_{s,\partial \Sigma}$ was defined in Theorem \ref{propFFFMM}.
For this, we introduce the kernel
\begin{equation} \label{kernelk}
k_s(\hat x,\hat y) := \int_0^\infty \frac{tdt }{|\hat x-t\hat y|^{3+s}}   = \int_0^\infty \frac{tdt }{\big( t^2+1-2t \hat x\cdot\hat y \big)^{\frac{3+s}{2}} },
\end{equation}
and we note that, since $\Sigma$ is a cone,
\[
\begin{split}
c^2_{s, \partial \Sigma}(\hat x)  &=  \int_{\partial \Sigma} \frac{	\big|\nu_\Sigma(\hat x)-\nu_\Sigma(y)\big|^2 }{|\hat x-y|^{3+s}} \,dH^{2}(y)
\\
&= \int_{\gamma}  \,dH^{1}(\hat y)  \int_0^\infty dt \,t \,  \frac{	\big|\nu_\Sigma(\hat x)-\nu_\Sigma (\hat y)\big|^2 }{|\hat x-t\hat y|^{3+s}}
\\
&= \int_{\gamma}   \big|\nu_\Sigma (\hat x)-\nu_\Sigma (\hat y)\big|^2 k_s(\hat x, \hat y) \,dH^{1}(\hat y),
\end{split}
\]
where $\nu_\Sigma$ is the exterior normal vector to $\partial \Sigma$.

We can now state the key integral estimate from which we will deduce our main theorem.

\begin{prop}\label{crucial3d}
Let $\Sigma$ be a stable cone for the $s$-perimeter in $\R^3$, and of class $C^2$ away from $0$. Let us call $\gamma := \partial{\Sigma}\cap S^2$, where $S^2:= \{x\in \R^3\, :\, |x|=1\}$.
Then,
\[
\int_{\gamma} c^2_{s,\partial \Sigma}(\hat x) \, dH^1(\hat x)   \le CH^1(\gamma),
\]
that is
\begin{equation}\label{main-est}
\iint_{\gamma\times\gamma}|\nu_\Sigma(\hat x)-\nu_\Sigma(\hat y)|^2 k_s(\hat x,\hat y)d H^1(\hat x)\,dH^1(\hat y)\le CH^1(\gamma),
\end{equation}
where $C=C(s)$ is bounded as $s\uparrow 1$.
\end{prop}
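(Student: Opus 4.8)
The plan is to combine the three preceding results of this section: the radial stability inequality of Lemma~\ref{step1}, the comparison estimate of Lemma~\ref{lemcompareS1}, and the saturating Hardy test function of Corollary~\ref{corhardy}, with the matching $\sigma = \frac{1+s}{2}$. Observe first that with this choice of $\sigma$ the exponents line up: $1-2\sigma = -s$, $2+2\sigma = 3+s$, and $r^{1-2\sigma} = r^{-s}$, so the integral $I[\zeta,\frac{1+s}{2}]$ from \eqref{Izeta} has precisely the structure of the right-hand side of Lemma~\ref{step1}, except that the inner double integral runs over $S^1\times S^1$ rather than over $\gamma\times\gamma$. Note also that $\sigma = \frac{1+s}{2} \in (1/2,1)$ whenever $s\in(0,1)$, so Corollary~\ref{corhardy} applies, and $1-\sigma = \frac{1-s}{2}$.

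The key steps, in order, are as follows. First, pick the radial function $\zeta\in C^\infty_c((0,+\infty))$, $\zeta\not\equiv 0$, furnished by Corollary~\ref{corhardy} for $\sigma=\frac{1+s}{2}$, so that $I[\zeta,\frac{1+s}{2}] \le C(1-s)\,J[\zeta,\frac{1+s}{2}]$ (absorbing the factor $1/2$ into $C$). Second, apply Lemma~\ref{step1} with this $\zeta$: this gives
\[
A\, J[\zeta,{\textstyle\frac{1+s}{2}}] \le \int_0^\infty dr\, r^{-s}\int_1^\infty d\tau\,\tau\,|\zeta(r)-\zeta(r\tau)|^2 \iint_{\gamma\times\gamma}\frac{dH^1(\hat x)\,dH^1(\hat y)}{|\hat x-\tau\hat y|^{3+s}},
\]
where $A = \int_\gamma c^2_{s,\partial\Sigma}(\hat x)\,dH^1(\hat x)$. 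Third, for every fixed $\tau>1$, bound the inner $\gamma\times\gamma$ integral by Lemma~\ref{lemcompareS1}, which replaces it by $\frac{C H^1(\gamma)}{1-s}\iint_{S^1\times S^1}\frac{dH^1(\hat X)\,dH^1(\hat Y)}{|\hat X-\tau\hat Y|^{3+s}}$; since this bound is uniform in $\tau$ and $r$, substitute it inside the $r$ and $\tau$ integrals. The resulting right-hand side is exactly $\frac{C H^1(\gamma)}{1-s}\, I[\zeta,\frac{1+s}{2}]$. Fourth, chain the inequalities:
\[
A\, J[\zeta,{\textstyle\frac{1+s}{2}}] \le \frac{C H^1(\gamma)}{1-s}\, I[\zeta,{\textstyle\frac{1+s}{2}}] \le \frac{C H^1(\gamma)}{1-s}\cdot C(1-s)\, J[\zeta,{\textstyle\frac{1+s}{2}}] = C H^1(\gamma)\, J[\zeta,{\textstyle\frac{1+s}{2}}].
\]
Finally, since $\zeta\not\equiv 0$ and $\zeta\in C^\infty_c((0,+\infty))$, the quantity $J[\zeta,\frac{1+s}{2}] = \int_0^\infty r^{-s}|\zeta(r)|^2\,dr$ is a strictly positive finite number, so we may divide both sides by it to obtain $A = \int_\gamma c^2_{s,\partial\Sigma}(\hat x)\,dH^1(\hat x) \le C H^1(\gamma)$ with $C=C(s)$ bounded as $s\uparrow 1$. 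The identity \eqref{main-est} then follows from the expression $c^2_{s,\partial\Sigma}(\hat x) = \int_\gamma |\nu_\Sigma(\hat x)-\nu_\Sigma(\hat y)|^2 k_s(\hat x,\hat y)\,dH^1(\hat y)$ computed just before the statement.

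The one genuinely delicate point is the bookkeeping of the $s$-dependence, which is the whole reason the statement is nontrivial: the Hardy constant $\mathcal H_{2,\sigma}$ contributes a factor $(1-s)^2$ (via $(1-\sigma)^2$), the fractional-Laplacian normalization $c_{d,\sigma}$ contributes a factor $(1-s)$, and the BV estimate of Theorem~\ref{perestimate} contributes a divergent factor $(1-s)^{-1}$ — through Lemma~\ref{lemcompareS1} this in fact enters twice, once via $H^1(\gamma)\le C/(1-s)$ hidden in the final bound and once explicitly, but in the end exactly one power of $(1-s)^{-1}$ survives and is cancelled by one power of $(1-s)$ from the Hardy/normalization side. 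The miracle that makes the theorem true is that these powers balance to give a clean bound $CH^1(\gamma)$ with $C$ uniform as $s\uparrow 1$; verifying this cancellation is really the content of the proof, and it is already packaged into the statements of Corollary~\ref{corhardy} and Lemma~\ref{lemcompareS1}, so the proof itself is just a careful concatenation. A minor subtlety to check is that the test function $\zeta$ depends on $s$, but since we only use that $J[\zeta,\frac{1+s}{2}]$ is finite and nonzero (to divide it out) this causes no trouble — no uniform-in-$s$ control on $\zeta$ is needed.
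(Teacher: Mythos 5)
Your proposal is correct and follows essentially the same route as the paper's own proof: take the almost-saturating radial Hardy test function of Corollary \ref{corhardy} with $\sigma=\frac{1+s}{2}$, insert it into the stability inequality of Lemma \ref{step1}, replace the $\gamma\times\gamma$ kernel integral by the $S^1\times S^1$ one via Lemma \ref{lemcompareS1} uniformly in $\tau$, and divide by $J[\zeta,\frac{1+s}{2}]\in(0,\infty)$ so that the factor $(1-s)^{-1}$ cancels against the $(1-s)$ from the Hardy side. Your bookkeeping of the $s$-dependence and the remark that no uniform-in-$s$ control of $\zeta$ is needed are both accurate.
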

\begin{proof}
Let $\zeta = \zeta(|x|)$ be a radial $C^2_c((0,+\infty))$ test function.  Using Lemma \ref{step1}, we obtain
\[
 A \, J[\zeta,{\textstyle \frac{1+s}{2}}] \le  \int_0^\infty dr\,r^{-s} \int_{1}^\infty   d\tau\,\tau  \big|\zeta(r)-\zeta(r\tau)\big|^2\iint_{\gamma\times\gamma}  \frac{dH^1(\hat x)  dH^1(\hat y)}{|\hat x -\tau\hat y|^{3+s} },
\]
where $A =  \int_{\gamma} dH^1(\hat x)  \,c^2_{s, \partial \Sigma}(\hat x)$ and $J[\zeta, \frac{1+s}{2}]$ is given by \eqref{Jzeta}.

Next, applying  Lemma \ref{lemcompareS1}, we deduce that
\[
\begin{split}
 \int_0^\infty dr\,r^{-s}  &\int_1^\infty   d\tau\,\tau  \big|\zeta(r)-\zeta(r\tau)\big|^2 \iint_{\gamma\times \gamma}\frac{  dH^1(\hat x) dH^1(\hat y) }{|\hat x-\tau \hat y|^{3+s}}
\\
&\le \frac{C H^1(\gamma)}{1-s}
\int_0^\infty  dr\,r^{-s} \int_1^\infty   d\tau\,\tau  \big|\zeta(r)-\zeta(r\tau)\big|^2
\iint_{S^1\times S^1} \frac{dH^1(\hat X)   dH^1(\hat Y) }{|\hat X-\tau \hat Y|^{3+s}}
\\
&= \frac{C H^1(\gamma)}{1-s}\, I[\zeta, {\textstyle \frac{1+s}{2}}],
\end{split}
\]
where $I[\zeta,\sigma]$ is as in \eqref{Izeta}.

Therefore, we have
\[
 A \, J[\zeta,{\textstyle \frac{1+s}{2}}] \le  \frac{C H^1(\gamma)}{1-s}\, I[\zeta, {\textstyle \frac{1+s}{2}}].
\]
Finally, choosing $\zeta\not\equiv0$ as in Corollary \ref{corhardy} (with $\sigma = \frac{1+s}{2}$) we have that $I[\zeta, {\textstyle \frac{1+s}{2}}] \le C(1-s) J[\zeta, {\textstyle \frac{1+s}{2}}]$. Since $\zeta\not\equiv0$, the proposition follows combining the last two inequalities.
\end{proof}

The following lemma gives a lower bound for $k_s$.
\begin{lem}\label{lemks}
For $s\in(1/2,1)$, we have
\[
k_s(\hat x,\hat y)  \ge c  \frac {1}{|\hat x-\hat y|^{2+s}} \quad \mbox{for all }\hat x,\,\hat y\in S^2
\]
and for some universal constant $c>0$.
\end{lem}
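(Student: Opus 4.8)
The goal is a pointwise lower bound for the one–dimensional kernel
\[
k_s(\hat x,\hat y)=\int_0^\infty\frac{t\,dt}{\big(t^2+1-2t\,\hat x\cdot\hat y\big)^{\frac{3+s}{2}}}
\]
in terms of $|\hat x-\hat y|^{-(2+s)}$, uniformly for $\hat x,\hat y\in S^2$ and $s\in(1/2,1)$. The natural approach is to introduce the parameter $a:=1-\hat x\cdot\hat y\in[0,2]$, so that $|\hat x-\hat y|^2=2a$ and the integrand reads $t\,(t^2-2t+1+2ta)^{-\frac{3+s}{2}}=t\,((t-1)^2+2ta)^{-\frac{3+s}{2}}$. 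The claim then amounts to showing
\[
\int_0^\infty\frac{t\,dt}{\big((t-1)^2+2ta\big)^{\frac{3+s}{2}}}\ \ge\ c\,a^{-\frac{2+s}{2}}
\]
for a universal $c>0$. I would estimate from below by restricting the range of integration to $t\in[1/2,2]$, where $t$ is comparable to $1$ (so $t\sim 1$ and $2ta\sim a$), reducing the integral to a lower bound for $\int_{1/2}^{2}((t-1)^2+a)^{-\frac{3+s}{2}}\,dt$ up to harmless constants.

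For this last integral, substitute $u=t-1$, so it becomes (a constant times) $\int_{-1/2}^{1/2}(u^2+a)^{-\frac{3+s}{2}}\,du$. I distinguish the regime $a\le 1/4$ from $a\ge 1/4$. When $a\ge 1/4$ the bound $a^{-\frac{2+s}{2}}$ is itself bounded above by a universal constant (recall $a\le 2$), while $k_s$ is bounded below by a universal constant (just restrict to $t\in[1/2,2]$ and use $(t-1)^2+2ta\le$ const), so the inequality is trivial after adjusting $c$; here one uses $s<1$ to keep exponents bounded. When $a\le 1/4$, I shrink the range further to $|u|\le\sqrt a\ (\le 1/2)$, on which $u^2+a\le 2a$, obtaining
\[
\int_{-\sqrt a}^{\sqrt a}\frac{du}{(u^2+a)^{\frac{3+s}{2}}}\ \ge\ \int_{-\sqrt a}^{\sqrt a}\frac{du}{(2a)^{\frac{3+s}{2}}}\ =\ \frac{2\sqrt a}{(2a)^{\frac{3+s}{2}}}\ =\ 2^{1-\frac{3+s}{2}}\,a^{-\frac{2+s}{2}},
\]
which is exactly the desired lower bound with a universal constant (again bounded below uniformly in $s\in(1/2,1)$ since the exponent $\tfrac{3+s}{2}$ stays in a compact interval). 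Combining the two regimes and tracing the comparability constants from the restriction $t\in[1/2,2]$ and $2ta\sim a$ yields the claim.

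There is no serious obstacle: the only mild care needed is to make sure every multiplicative constant that appears—from $t\sim 1$ on $[1/2,2]$, from replacing $2ta$ by $a$, and from the Gamma-type exponent $\tfrac{3+s}{2}\in(\tfrac74,2)$—is uniform in $s$ over $(1/2,1)$, which it manifestly is because $s$ ranges in a fixed compact subinterval of $(0,1)$. One could alternatively compute $k_s$ in closed form via the substitution reducing it to a Beta integral, but the elementary truncation argument above is cleaner and makes the $s$-uniformity transparent; that is the route I would take.
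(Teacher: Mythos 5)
Your proposal is correct, and it is essentially the paper's argument: both truncate the integral to $t$ near $1$, absorb the cross term $2t(1-\hat x\cdot\hat y)$ into a constant multiple of $|\hat x-\hat y|^2$, and then extract the scaling $|\hat x-\hat y|^{-(2+s)}$ with constants uniform in $s\in(1/2,1)$. The only cosmetic difference is that the paper handles both regimes at once via the substitution $\bar t=(t-1)/b$ with $b^2=1-\hat x\cdot\hat y$, whereas you split into the cases $a\le 1/4$ and $a\ge 1/4$ and restrict to $|u|\le\sqrt a$; this changes nothing essential.
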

\begin{proof}
Let us call $b^2 := 1- \hat x\cdot\hat y =\frac{1}{2} |\hat x-\hat y|^2$. Note that $b\in(0,\sqrt 2)$. We have
\[
\begin{split}
k_s(\hat x,\hat y) & = \int_0^\infty \frac{tdt }{\big( (t-1)^2+ 2t(1- \hat x\cdot\hat y) \big)^{\frac{3+s}{2}} }
\ge  \int_{\frac 1 2}^{\frac 3 2} \frac{(1/2)dt }{\big( (t-1)^2+ 3b^2 \big)^{\frac{3+s}{2}} }
\\
&\geq \frac{1}{2} \int_{-\frac{1}{2b}}^{\frac{1}{2b}} \frac{b d\bar t }{\big( (b\bar t)^2+ 3b^2 \big)^{\frac{3+s}{2}} }
\ge \frac{1}{2 b^{2+s}} \int_{-1/4}^{1/4} \frac{d\bar t }{\big( {\bar t}^2+ 3\big)^{\frac{3+s}{2}} }
\\
& \geq   \frac {c}{|\hat x-\hat y|^{2+s}},
\end{split}
\]
where, in the second inequality, we have used the change of variables $\bar t=(t-1)/b$. This concludes the proof of the lemma.
\end{proof}

We observe that, if a connected component $\gamma_0$ of $\gamma$ is parametrized by arc length, then
\begin{equation}\label{ineqarcl}
k_s(\gamma_0(t),\gamma_0(\bar t))\geq \frac{c}{|\gamma_0(t)-\gamma_0(\bar t)|^{2+s}}\geq \frac{c}{|t-\bar t|^{2+s}},
\end{equation}
where we have used Lemma \ref{lemks} for the first inequality and that $|\gamma_0(t)-\gamma_0(\bar t)|\leq |t-\bar t|$ for the second inequality.

We conclude this section with the following embedding.

\begin{lem}\label{poincare}
Let $\sigma\in[3/4,1)$ and $I=[0,5\pi]$. Given  $f: I \rightarrow \R$  we have
\[
\| f -\overline f\|_{C^{1/4}(I)} \leq C \big[f\big]_{H^{\sigma}(I)} 
\]
where $\overline f = \frac{1}{5\pi}\int_I f$,
\[
\big[f\big]_{H^{\sigma}(I)}  := \left( (1-\sigma) \int_I \int_I \frac{|f(t)-f(\bar t)|^2}{|t-\bar t|^{1+2\sigma}}\,dt\, d\bar t \right) ^{1/2},
 \]
and $C$ is a universal constant.
\end{lem}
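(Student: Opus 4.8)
The plan is to prove a fractional Sobolev-type embedding $H^\sigma(I) \hookrightarrow C^{1/4}(I)$ on the interval $I = [0,5\pi]$, with a constant that stays bounded as $\sigma \uparrow 1$ thanks to the $(1-\sigma)$ weight built into the seminorm $[f]_{H^\sigma(I)}$. Since subtracting the average $\overline f$ does not change $[f]_{H^\sigma(I)}$, it suffices to bound $\|f\|_{C^{1/4}(I)}$ in terms of $[f]_{H^\sigma(I)}$ for functions with zero average. The two things to control are the Hölder seminorm $[f]_{C^{1/4}}=\sup_{t\neq\bar t}|f(t)-f(\bar t)|/|t-\bar t|^{1/4}$ and the sup norm $\|f\|_{L^\infty}$; the latter follows from the former once we know $\overline f = 0$, since then $|f(t)| = |f(t)-\overline f| \le \frac{1}{5\pi}\int_I |f(t)-f(\bar t)|\,d\bar t \le C[f]_{C^{1/4}}$.

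For the Hölder seminorm the standard (Morrey-type) argument goes as follows. Fix $t,\bar t \in I$ with $0<|t-\bar t|=:h\le 5\pi$. Let $B$ be the sub-interval of $I$ of length $h$ containing both $t$ and $\bar t$ (possible after translating; if near an endpoint take $B\subset I$ of length $h$ still containing the two points, using $h\le 5\pi=|I|$). For any $\tau\in B$ write $|f(t)-f(\bar t)| \le |f(t)-f(\tau)| + |f(\tau)-f(\bar t)|$ and average over $\tau\in B$:
\[
|f(t)-f(\bar t)| \le \frac{1}{h}\int_B |f(t)-f(\tau)|\,d\tau + \frac{1}{h}\int_B |f(\bar t)-f(\tau)|\,d\tau.
\]
Each of the two terms is of the form $\frac1h\int_B |f(a)-f(\tau)|\,d\tau$ with $a\in B$, $|B|=h$; by Cauchy–Schwarz and $|a-\tau|\le h$,
\[
\frac1h\int_B |f(a)-f(\tau)|\,d\tau \le \frac{1}{h}\left(\int_B\int_B \frac{|f(a)-f(\tau)|^2}{|a-\tau|^{1+2\sigma}}\,d\tau\,da'\,\frac{1}{|B|}\right)^{1/2}\left(\int_B |a-\tau|^{1+2\sigma}\,d\tau\right)^{1/2},
\]
wait—more directly: apply Cauchy–Schwarz in $\tau$ only, inserting the weight $|a-\tau|^{-(1+2\sigma)/2}\cdot|a-\tau|^{(1+2\sigma)/2}$, to get $\frac1h\int_B|f(a)-f(\tau)|\,d\tau \le \frac1h\left(\int_B \frac{|f(a)-f(\tau)|^2}{|a-\tau|^{1+2\sigma}}d\tau\right)^{1/2}\left(\int_B|a-\tau|^{1+2\sigma}d\tau\right)^{1/2}$. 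The second factor is $\le C h^{(1+(1+2\sigma))/2}=C h^{1+\sigma}$. Then average the first factor over $a\in B$ using Cauchy–Schwarz once more, or simply note $\int_B\int_B \frac{|f(a)-f(\tau)|^2}{|a-\tau|^{1+2\sigma}}\,da\,d\tau \le \frac{1}{1-\sigma}[f]_{H^\sigma(I)}^2$. After a short computation this yields $|f(t)-f(\bar t)| \le C\,\frac{1}{\sqrt{1-\sigma}}\cdot\frac{h^{\sigma}}{h}\cdot\Big(\frac{1}{1-\sigma}\Big)^{?}\dots$—so one must track the $(1-\sigma)$ powers carefully; because $[f]_{H^\sigma(I)}^2$ already carries a factor $(1-\sigma)$ and the only other $(1-\sigma)$ loss comes from $\int_B|a-\tau|^{-1-2\sigma+\text{something}}$ type integrals, the powers cancel and one gets $|f(t)-f(\bar t)| \le C\,h^{\sigma-1/2}\,[f]_{H^\sigma(I)}$. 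Since $\sigma \ge 3/4$, $\sigma - 1/2 \ge 1/4$ and $h\le 5\pi$, so $h^{\sigma-1/2} \le C h^{1/4}$, giving the claimed $C^{1/4}$ bound.

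The \emph{main obstacle} is the bookkeeping of the $\sigma$-dependence: one must verify that every integral of the type $\int_0^h \rho^{-(1+2\sigma)}\rho^{\alpha}\,d\rho$ arising in the estimate produces at worst a single factor $(1-\sigma)^{-1}$ (coming from integrability near $\rho=0$ when the exponent is close to $-1$), which is then exactly cancelled by the $(1-\sigma)$ weight in the definition of $[f]_{H^\sigma(I)}$; and that the resulting power of $h$ is $\ge 1/4$ uniformly for $\sigma\in[3/4,1)$. A clean way to organize this, avoiding the explicit ring computation, is to invoke the scaling structure: the functional $(1-\sigma)\iint |f(t)-f(\bar t)|^2|t-\bar t|^{-1-2\sigma}$ on an interval of length $h$ scales like $h^{-2\sigma}$ times its value on a unit interval, while the Hölder quotient scales like $h^{-1/4}$; tracking these two scalings together with the uniform (in $\sigma\in[3/4,1)$, thanks to the weight) embedding $H^\sigma([0,1])\hookrightarrow C^{\sigma-1/2}([0,1])$ — which is the $\sigma\to 1$–robust version of the classical estimate — gives the result after passing from $[0,1]$ to $I=[0,5\pi]$ by a fixed change of variables. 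The restriction $\sigma\ge 3/4$ is used only to guarantee $\sigma-1/2\ge 1/4$; the restriction to the fixed interval $I$ is harmless because all constants are allowed to depend on $|I|=5\pi$.
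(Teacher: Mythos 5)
Your overall reduction (subtract the mean, recover the sup norm from the H\"older seminorm) is fine, but the core of the argument has a genuine gap, and it sits exactly where the lemma has content: the uniformity of the constant as $\sigma\uparrow 1$. First, a structural issue: after averaging over $\tau\in B$ you must bound $\bigl(\int_B |f(t)-f(\tau)|^2\,|t-\tau|^{-1-2\sigma}\,d\tau\bigr)^{1/2}$ at the \emph{fixed} point $t$ (and likewise at $\bar t$); this one-variable quantity is not controlled by the double integral, and you cannot ``average the first factor over $a\in B$'' because $a$ is precisely the point appearing in the H\"older quotient. The standard repair is a Campanato/telescoping argument over nested dyadic subintervals. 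Second, and more seriously: once repaired, that Morrey--Campanato computation yields $\|f-\overline f\|_{C^{1/4}(I)}\le C\,(1-\sigma)^{-1/2}\,[f]_{H^\sigma(I)}$, not the stated bound. The seminorm enters only through $\iint_{B\times B}|f(a)-f(\tau)|^2|a-\tau|^{-1-2\sigma}\,da\,d\tau\le (1-\sigma)^{-1}[f]_{H^\sigma(I)}^2$, and none of the remaining quantities ($\int_B|a-\tau|^{1+2\sigma}\,d\tau\le |B|^{2+2\sigma}$, the geometric series with ratio $2^{-(\sigma-1/2)}\le 2^{-1/4}$) produces a compensating factor $(1-\sigma)$; your assertion that ``the powers cancel'' is not borne out by the computation you set up. Your fallback---invoking ``the uniform (in $\sigma$) embedding $H^\sigma([0,1])\hookrightarrow C^{\sigma-1/2}([0,1])$''---is circular: up to a trivial rescaling and the observation $\sigma-1/2\ge 1/4$, that uniform embedding \emph{is} the lemma.

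The uniformity in $\sigma$ is a genuine Bourgain--Brezis--Mironescu-type fact, not bookkeeping: heuristically, a unit oscillation across an interval forces comparable activity over $\sim(1-\sigma)^{-1}$ dyadic scales, and this accumulation is what the weight $(1-\sigma)$ is designed to compensate; a single-scale Cauchy--Schwarz cannot see it. The paper obtains the lemma by combining three imported results: the embedding $H^{3/4}(I)\hookrightarrow C^{1/4}(I)$ at the \emph{fixed} exponent $3/4$ (where the $\sigma$-dependence is irrelevant), the fractional Poincar\'e inequality of \cite{BBM2} to absorb $\|f-\overline f\|_{L^2(I)}$, and, crucially, the comparison $[f]_{H^{3/4}(I)}\le C\,[f]_{H^\sigma(I)}$ of the $(1-\sigma)$-normalized seminorms with $C$ independent of $\sigma$ (Remark 5 in \cite{BBM}). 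That last comparison is the non-elementary ingredient your proposal is missing; without it (or an equivalent), the argument only gives a constant of order $(1-\sigma)^{-1/2}$, which is useless here since the lemma is applied with $\sigma=\frac{1+s}{2}\uparrow 1$ (it would, for instance, destroy the smallness in \eqref{goals1} and in Lemma \ref{lemcurv2}).
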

\begin{proof}
Let  us denote $\big\|f\big\|_{H^{\sigma}(I)} = \|f\|_{L^2(I)} + \big[f\big]_{H^{\sigma}(I)}$. Since $\sigma\ge 3/4$ we have that $H^{\sigma}(I)$ is is continuously embedded in $C^{1/4}(I)$. Then, using the fractional Poincar\'e inequality (see e.g. the ``Fact'' stated in page 80 of \cite{BBM2}) in the interval $I$ we obtain
\[
\begin{split}
\| f -\overline f\|_{C^{1/4}(I)} &\le C \| f -\overline f\|_{H^{3/4}(I)}  = C\{ \| f -\overline f\|_{L^2(I)} +  \big[f\big]_{H^{3/4}(I)}\} \\ &\le C  \big[f\big]_{H^{3/4}(I)}\leq C  \big[f\big]_{H^{\sigma}(I)},
\end{split}
\]
with $C$ universal. We have used, in the last inequality, Remark 5 in \cite{BBM}.
\end{proof}

\section{Auxiliary results on curves of $S^2$}
In this section we prove geometric estimates for a simple curve $\gamma_0$ in $S^2$ satisfying the curvature bounds from Proposition \ref{crucial3d}.

Recall that, throughout the paper,  the trace on $S^2$ of $\partial\Sigma$, which we call $\gamma$, is  (since $\Sigma$ is $C^2$ away from $0$) a finite union of $C^2$ simple closed curves on $S^2$. Moreover, by the perimeter estimate of Theorem \ref{perestimate} we know that the total length of $\gamma$ is bounded by $C(1-s)^{-1}$.
In addition, we obtained in Proposition \ref{crucial3d} a certain integral control on the squared nonlocal second fundamental form of $\partial \Sigma$.

Lemmas \ref{lemcurv} and \ref{lemcurv2} below contain  geometric estimates  for a closed simple curve (i.e., without self-intersections) $\gamma_0$ in $S^2$,  whose length is bounded by $C(1-s)^{-1}$ and satisfying an  integral control on its squared nonlocal second fundamental form. A crucial point is that the constants  in these estimates do not blow up as $s\uparrow 1$.  In the proof of Theorem \ref{thmcones3} these lemmas will be applied to the connected components of $\gamma$.

The first and most important estimate is the following bound, uniform as $s\uparrow1$, for the length of $\gamma_0$.

\begin{lem}\label{lemcurv}
Let $s\in(1/2,1)$,  $L>0$, $\gamma_0= \gamma_0(t) : [0,L] \rightarrow S^2$  be some $C^2$ closed curve without self-intersections and  parametrized by arc length ---thus $L = {\rm length}(\gamma_0)$.
Let $\nu = \gamma_0 \wedge \gamma_0'$  be the ``clockwise'' normal vector (which is tangent to the sphere).

Assume that, for some positive constant $C_0$,
\begin{equation}\label{hp-lemcurv}
\int_0^L \int_0^L \big|\nu(t)-\nu(\bar t)\big|^2 k_s\big(\gamma_0(t), \gamma_0(\bar t)\big)\, dt\, d\bar t\le C_0 L <+\infty.
\end{equation}
 Assume in addition that
\[ 0< L\le \frac{C_0}{1-s}.\]

Then,
\[  L\le C,\]
for some constant $C$ depending only on $C_0$.
\end{lem}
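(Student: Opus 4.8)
The goal is to bound the length $L$ of a closed simple curve $\gamma_0\subset S^2$, uniformly as $s\uparrow1$, from the integral control \eqref{hp-lemcurv} on its (nonlocal) squared second fundamental form. The starting point is inequality \eqref{ineqarcl}, which combined with \eqref{hp-lemcurv} gives a control of the form
\[
\int_0^L\int_0^L \frac{|\nu(t)-\nu(\bar t)|^2}{|t-\bar t|^{2+s}}\,dt\,d\bar t \le C C_0 L,
\]
i.e. a bound on the $H^{(1+s)/2}$ seminorm of the normal field $\nu$ (up to the constant $1-s$, as in Lemma \ref{poincare}). The plan is to exploit that, if this seminorm is small relative to $L$, then $\nu$ is nearly constant on subintervals of bounded length, which geometrically forces $\gamma_0$ to be nearly a great circle there; and a curve that stays close to a great circle cannot be too long without self-intersecting. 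So the argument is by contradiction: suppose $L$ is very large.

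First I would normalize by working on subintervals. Cover $[0,L]$ by $\sim L/(5\pi)$ consecutive intervals $I_j$ of length $5\pi$ (the constant $5\pi$ is chosen to match Lemma \ref{poincare}). On each such interval, apply the embedding Lemma \ref{poincare} with $\sigma=(1+s)/2\in[3/4,1)$ (valid since $s\ge1/2$) to each component of $\nu$: this gives $\|\nu-\overline\nu_j\|_{C^{1/4}(I_j)}\le C[\nu]_{H^\sigma(I_j)}$. Summing the squared seminorms $[\nu]_{H^\sigma(I_j)}^2$ over $j$ and using the global bound, the \emph{average} over $j$ of $[\nu]_{H^\sigma(I_j)}^2$ is $\lesssim C_0 L \cdot (5\pi)/L = C C_0$; hence some definite fraction of the intervals $I_j$ satisfy $[\nu]_{H^\sigma(I_j)}\le C\sqrt{C_0}$, and in fact —- since $L\le C_0/(1-s)$ makes the total $H^\sigma$ mass only $\lesssim C_0^2/(1-s)\cdot(1-s)=C_0^2$ after the $(1-s)$ factor is absorbed —- one can arrange that $\nu$ oscillates by at most a fixed small amount $\delta_0$ on $I_j$ for a positive proportion of the indices $j$, once $L$ is large. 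Wait —- the cleaner route: the \emph{total} bound $\sum_j [\nu]_{H^\sigma(I_j)}^2 \le C C_0 L$ means at most $CC_0 L/\delta_0^2$ intervals are "bad", so if $L$ exceeds a threshold depending only on $C_0$ there are many "good" intervals where $\nu$ varies by less than $\delta_0$ in $C^{1/4}$.

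On a good interval $I_j$, with $\nu$ within $\delta_0$ of a fixed unit vector $e_j$, the curve $\gamma_0$ restricted to $I_j$ lies within distance $C\delta_0$ of the great circle $\{x\in S^2: x\cdot e_j=0\}$; moreover, since $\gamma_0'=\nu\wedge\gamma_0$ and $\nu$ is nearly constant, $\gamma_0$ traverses this near-great-circle at nearly unit angular speed, so an interval $I_j$ of length $5\pi$ — longer than the circumference $2\pi$ — forces $\gamma_0|_{I_j}$ to wind more than once around, hence to self-intersect, contradicting simplicity of $\gamma_0$. (This is where I expect the main obstacle: turning the $C^{1/4}$ closeness of $\nu$ to a constant into a genuine self-intersection of $\gamma_0$ requires a careful quantitative ODE comparison of $\gamma_0$ with the solution of $\dot x = e_j\wedge x$, and one must ensure $\delta_0$ can be taken small enough that the "winding more than once forces a crossing" conclusion is rigorous — this is presumably packaged via the topological Lemma \ref{topolog} on closed injective curves in the cylinder $S^1\times(-1,1)$, projecting the tube around the great circle onto such a cylinder.) Therefore no good interval of length $5\pi$ can exist, which caps the number of good intervals at zero — contradiction once $L$ is large enough in terms of $C_0$. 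Hence $L\le C(C_0)$, as claimed.

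\textbf{Remark on the role of hypotheses.} The assumption $L\le C_0/(1-s)$ is used precisely to convert the $s$-dependent weight $(1-s)$ hidden in $[\,\cdot\,]_{H^\sigma}$ into something harmless: it guarantees the total $H^\sigma$-energy of $\nu$ stays bounded by a constant times $C_0^2$ independent of $s$, so the counting of good/bad intervals is uniform as $s\uparrow1$. The simplicity (no self-intersections) of $\gamma_0$ is the only topological input and is what ultimately bounds the length — without it a long curve could simply wrap many times around a great circle while keeping $\nu$ nearly constant.
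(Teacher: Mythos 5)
Your outline breaks down at its central geometric step: you claim that on a ``good'' interval of length $5\pi$, where $\nu$ oscillates by at most a small fixed amount, the curve must wind more than once around a great circle and ``hence self-intersect, contradicting simplicity.'' This is false. A curve can wind twice (or many times) around a near-equatorial band while drifting slowly in latitude --- a spiral --- and remain injective: the $C^{1/4}$-closeness of $\nu$ to a constant vector only limits the latitude drift per loop, it does not force a crossing. Consequently no contradiction can be extracted from the mere existence of a good interval, and your plan of ``capping the number of good intervals at zero'' collapses; in fact, for the configurations one must rule out (long spirals), essentially \emph{every} interval is good. This is precisely why the paper's argument is not purely topological: it uses closedness plus simplicity to conclude (via Lemma \ref{topolog}, applied to the double loop $\gamma_0|_{I_\circ}$ and to the \emph{rest} of the curve) that the remaining strand must come back through the band between the two loops with opposite orientation, producing a set $\bar A$ of parameters, of measure at least $\tfrac{19}{10}\pi$, at which $\nu$ differs from $e$ by a definite amount while the corresponding points lie at distance $\lesssim \delta^{1/2}$ from the loops. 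The contradiction then comes from a \emph{quantitative} lower bound on the nonlocal interaction energy between the loop and this returning strand, $\int_{I_\circ} |\gamma_0(t)-\gamma_0(\bar t)|^{-(2+s)}\,dt \gtrsim \delta^{-(1+s)/2}$ for $\bar t\in\bar A$, compared against the smallness $\delta$ of the energy on the good interval.

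Two further features of the paper's setup are missing from your scheme and cannot be dispensed with. First, the quantity made small on a good interval must be the energy on $I_j\times[0,L]$ (interaction of the interval with the \emph{whole} curve), not just the $I_j\times I_j$ seminorm your Lemma \ref{poincare}-based counting controls: the returning strand lies outside $I_j$, so the diagonal blocks alone give no handle on it. Second, with $\delta\sim C_0^2/N$ the comparison $\delta\gtrsim (1-s)\,\delta^{-(1+s)/2}$ does not immediately yield $L\le C$; it only improves $L\le C_0(1-s)^{-1}$ to $L\le C(1-s)^{-4/7}$, and the paper must bootstrap twice more (obtaining $L\le C(1-s)^{-1/7}$, then $L\le C(1-s)^{2/7}$) before reaching an absurdity for $N$ large and $s$ close to $1$. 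Your fixed-$\delta_0$ counting and the ``winding forces self-intersection'' shortcut bypass all of this quantitative machinery, so the proposal as written does not prove the lemma.
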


To prove Lemma \ref{lemcurv}, the following ``topological observation will be crucial.
\begin{lem}\label{topolog}
Let   $S^1\times (-1,1)$ be  the  cylinder
\[  \{(x,y,z)\in \R^3 \ :\  x^2 +y^2 =1\ , \ |z|<1\}.\]
Let $\theta\in \R$ (mod $2\pi\mathbb Z$) and $z\in (-1,1)$ be the standard cylindrical coordinates.

Assume that $ \omega : [0,4\pi] \rightarrow S^1\times (-1,1)$  is a $C^1$ curve of the type
\[
\omega=\omega(\theta)= \big(\cos {\theta}, \sin {\theta},  z({\theta})\big)
\]
and satisfying,  for some $b\in \big(0, \frac{1}{100}\big)$,
\[
|z(0)| \le \frac{b}{2}   \qquad \mbox{and} \qquad |z'(\theta)|\le \frac{b}{8\pi} \quad \mbox{for all }\theta\in[0,4\pi].
\]
Assume in addition that $\omega$ is injective ---i.e., it does not have self intersections--- and that $z(0)<z(4\pi)$.

Let  $\tilde \omega : [t_1, t_2] \rightarrow S^1\times (-1,1)$ be any  $C^1$ curve such that $\tilde \omega(t_1) =  \omega(4\pi)$ and $\tilde \omega(t_2) =  \omega(0)$ such that $\tilde\omega\big((t_1,t_2)\big)$ and ${\omega}\big((0,4\pi)\big)$ are disjoint. Assume that $\tilde \omega$ is parametrized by the arc length. Let us denote
\[
\tilde \omega(t) =\big(\cos\tilde \theta(t),\,  \sin\tilde \theta(t), \, \tilde z(t) \big).
\]

Then,  for each $\theta_0 \in (0,2\pi)$ there is at least one  $t\in(t_1, t_2)$ such that
\[   \tilde \theta(t) = \theta_0 \  (\mbox{mod }2\pi), \quad
 -b \le z(\theta_0)\le \tilde z(t)\le  z(\theta_0+2\pi)\le b , \quad \mbox{and} \quad \tilde \theta'(t) \le 0. \]

As a consequence, using that $\tilde\omega$ is parametrized by the arc length and defining   $A:= \{ t\in (t_1,t_2)\ :\  |\tilde z(t)| \le b,   \   \tilde \theta'(t)\le0\}$ we have
\[
H^1(A) \ge 2\pi.
\]
\end{lem}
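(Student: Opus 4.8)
\textbf{Proof proposal for Lemma~\ref{topolog}.}
The plan is to reduce the statement to a purely topological fact about a closed injective curve in the open cylinder $S^1\times(-1,1)$: if such a curve, obtained by concatenating $\omega$ and $\tilde\omega$, winds around the cylinder, then its ``return path'' $\tilde\omega$ must cross every vertical fiber $\{\theta=\theta_0\}$, and moreover it must do so travelling in the negative $\theta$-direction somewhere (since $\omega$ already crosses each fiber once in the positive direction, by injectivity and the closedness of the concatenation). Let me first set up the concatenated curve. Since $\omega(0)=\tilde\omega(t_2)$ and $\omega(4\pi)=\tilde\omega(t_1)$, and $\omega((0,4\pi))$ and $\tilde\omega((t_1,t_2))$ are disjoint, the concatenation $\Gamma$ of $\omega|_{[0,4\pi]}$ followed by $\tilde\omega|_{[t_1,t_2]}$ (traversed from $t_1$ to $t_2$) is a closed injective $C^1$ (piecewise-$C^1$) curve in the cylinder.

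Next I would compute the winding of $\Gamma$ around the cylinder, i.e. the total increment of the angular coordinate $\theta$ along $\Gamma$ divided by $2\pi$; since $\Gamma$ is closed this is an integer $N$. Along $\omega$ the angle increases by exactly $4\pi$ (the parametrization is $\theta\mapsto(\cos\theta,\sin\theta,z(\theta))$ on $[0,4\pi]$), so $\omega$ contributes $+2$ to $N$. Thus $\tilde\omega$ must contribute $-(N-2)\cdot 2\pi$... wait, rather: the increment of $\theta$ along $\tilde\omega$ equals $2\pi N - 4\pi$. The key claim is that $N=1$, so that $\tilde\omega$ decreases $\theta$ by a net $-2\pi$. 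To see $N=1$: because $\omega$ is injective and (by the $z$-bounds: $|z(0)|\le b/2$ and $|z'|\le b/(8\pi)$, so $|z(\theta)|\le b/2 + b/2 = b < 1/100$ on $[0,4\pi]$) stays in the thin slab $|z|\le b$, and $z(0)<z(4\pi)$, the curve $\omega$ looks like a ``barely-ascending double loop''; any closed injective curve containing it and disjoint from it in the open part can wind at most once net around the cylinder — here I would invoke an elementary Jordan-curve / degree argument. Actually the cleanest route: reparametrize so $\theta$ itself is a coordinate; the vertical line $\{\theta=\theta_0\}$ is met by $\omega$ exactly twice, at heights $z(\theta_0)$ and $z(\theta_0+2\pi)$ with $z(\theta_0)<z(\theta_0+2\pi)$ (strict since $z$ is non-decreasing in the weak sense from $z(0)<z(4\pi)$ — more precisely, from $z(\theta+2\pi)-z(\theta)=\int_\theta^{\theta+2\pi}z'$ and the fact that the total increase $z(4\pi)-z(0)>0$ is split over the two half-periods... here I would need to be slightly careful, see below). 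Since $\Gamma$ is a closed Jordan curve in the cylinder, it separates the cylinder; the arc of $\tilde\omega$ must connect $\omega(4\pi)$ (height $z(4\pi)=z(2\pi+2\pi)$, the ``upper'' intersection point on fiber $\theta=0\equiv 4\pi$) to $\omega(0)$ (height $z(0)$, the ``lower'' one on fiber $\theta=0$), and by the intermediate value theorem together with the separation property it is forced to cross every intermediate fiber $\{\theta=\theta_0\}$, $\theta_0\in(0,2\pi)$, at a point whose height lies between $z(\theta_0)$ and $z(\theta_0+2\pi)$, i.e. in $[z(\theta_0),z(\theta_0+2\pi)]\subset[-b,b]$.

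Then I would extract the sign condition $\tilde\theta'(t)\le 0$ at (one of) the crossing(s): consider the continuous function $t\mapsto\tilde\theta(t)$ on $[t_1,t_2]$ lifted to $\R$; its total increment is $2\pi N-4\pi=-2\pi$ (using $N=1$). Since it starts and ends on fibers differing by a net $-2\pi$, for each $\theta_0\in(0,2\pi)$ the lifted angle $\tilde\theta$ attains every value in a set that includes $\theta_0\pmod{2\pi}$ at a point of decrease — formally, the set $\{t:\tilde\theta(t)\equiv\theta_0\}$ is nonempty and, because the net increment is negative, at least one such $t$ has $\tilde\theta'(t)\le0$; at that $t$, by the paragraph above, the height is in $[z(\theta_0),z(\theta_0+2\pi)]$, hence $|\tilde z(t)|\le b$. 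This gives the first displayed conclusion. For the consequence, I would note that for $\theta_0$ ranging over $(0,2\pi)$ the corresponding crossing parameters $t(\theta_0)$ are distinct (different $\theta$-values mod $2\pi$... but one must worry that $\tilde\theta$ could hit the same $t$ for two different $\theta_0$ — no: $t(\theta_0)$ has $\tilde\theta(t(\theta_0))\equiv\theta_0$, and these are distinct mod $2\pi$, so the $t(\theta_0)$ are distinct), and all lie in $A=\{t\in(t_1,t_2):|\tilde z(t)|\le b,\ \tilde\theta'(t)\le0\}$. Since $\tilde\omega$ is parametrized by arc length, $|\tilde\omega'|=1$, and the map $t\mapsto\tilde\theta(t)$ (on the arc-length domain) has $|\tilde\theta'(t)|=|\text{angular speed}|\le|\tilde\omega'(t)|=1$ (the angular coordinate is $1$-Lipschitz along an arc-length curve in the unit cylinder). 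On $A$ we have $\tilde\theta'\le0$, and the image of $A$ under $\tilde\theta$ covers $(0,2\pi)\pmod{2\pi}$, hence $\int_A|\tilde\theta'(t)|\,dt\ge 2\pi$; combined with $|\tilde\theta'|\le1$ this yields $H^1(A)\ge 2\pi$, as claimed.

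The main obstacle I anticipate is making the topological step fully rigorous: showing $N=1$ and that $\tilde\omega$ must cross each fiber at an intermediate height, i.e. converting the intuitive Jordan-curve-in-a-cylinder picture into a clean argument. The cylinder $S^1\times(-1,1)$ is not simply connected, so a naive ``Jordan curve theorem'' does not directly apply; the right framework is to lift to the universal cover $\R\times(-1,1)$ (a strip), where $\omega$ lifts to a graph-like arc from $(\theta,z(\theta))$ and the winding number controls how the lifts of $\tilde\omega$ connect the $2\pi\mathbb Z$-translates of the endpoints. One then argues in the strip with the ordinary Jordan curve theorem applied to a suitable closed curve formed from a translate-chunk of the lift of $\Gamma$, using that $\omega$ stays in $|z|\le b$ to ``cap off'' the strip near $z=\pm1$ without meeting the curve. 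A secondary technical point is the strictness $z(\theta_0)<z(\theta_0+2\pi)$: it follows because $z(0)<z(4\pi)$ forces $\int_0^{4\pi}z'>0$, and one needs this to hold on each fiber — if it fails for some $\theta_0$ one can still run the argument with $\le$ in place of $<$ and the conclusion $|\tilde z(t)|\le b$ survives; I would phrase the lemma's proof so as not to need strictness. I expect these to be the places where the ``elementary'' adjective hides the real work.
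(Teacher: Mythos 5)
Your closing step (the image of $A$ under $\tilde\theta$ covers a full circle, $|\tilde\theta'|\le|\tilde\omega'|=1$, hence $H^1(A)\ge 2\pi$) is correct and is essentially the paper's own last step. The gap is in the core of the lemma: producing, for a fixed $\theta_0$, \emph{one} time $t$ at which the three conditions hold \emph{simultaneously}. You derive the sign condition $\tilde\theta'(t)\le 0$ from the net angular decrease of $\tilde\omega$ (via the winding number of the concatenated curve $\Gamma$), and the height condition from a separate ``separation plus IVT'' claim, and then write ``at that $t$, by the paragraph above, the height is in $[z(\theta_0),z(\theta_0+2\pi)]$''. That is a non sequitur: the separation paragraph (even if it were proved) only yields \emph{some} crossing of the fiber at intermediate height, while the crossing you selected with $\tilde\theta'\le 0$ (e.g.\ the last time the lifted angle takes a given value) may occur at any height in $(-1,1)$; nothing prevents $\tilde\omega$ from crossing the fiber backwards high up near $z=\pm1$ and forwards between the strands. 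Moreover the separation step itself is not established and is, as stated, circular: $\tilde\omega$ is \emph{part of} $\Gamma$, so the fact that the Jordan curve $\Gamma$ separates the cylinder does not constrain where $\tilde\omega$ meets a fiber; you yourself flag this as the unresolved ``main obstacle''. (Also, $N=1$ is asserted without proof; it is in fact not needed, since a simple closed curve in an annulus has winding number in $\{-1,0,1\}$ and any of these gives a net decrease of at least $2\pi$ — but fixing this does not close the coupling gap above.)

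The paper's proof supplies exactly the missing device, and it yields both conditions at once: for each fixed $\theta_0$ one removes from the cylinder not $\Gamma$ but the set $\omega([0,4\pi])\cup\big(\{(\cos\theta_0,\sin\theta_0)\}\times[-b,b]\big)$, and observes that its complement has exactly two connected components, one containing a neighborhood of $Q=\omega(4\pi)$ (together with the band $\{b<z<1\}$) and the other a neighborhood of $P=\omega(0)$ (together with $\{-1<z<-b\}$); the channel between the two strands of the spiral opens, past the endpoint $Q$, into the first component on the side $\theta>\theta_0$, and, past $P$, into the second component on the side $\theta<\theta_0$. Since $\tilde\omega$ joins $Q$ to $P$ and avoids $\omega((0,4\pi))$, it must change component through the middle portion $\{\theta_0\}\times[z(\theta_0),z(\theta_0+2\pi)]$ of the segment, crossing from the side $\theta>\theta_0$ to the side $\theta<\theta_0$; at such a time one gets simultaneously $\tilde z(t)\in[z(\theta_0),z(\theta_0+2\pi)]\subset[-b,b]$ and $\tilde\theta'(t)\le 0$. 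Your concluding sketch (lifting to the strip and applying the planar Jordan theorem to a capped-off piece of the lift) points in this direction, but as written the argument that would tie the height and the sign of $\tilde\theta'$ to the same crossing is missing, and that is precisely the content of the lemma.
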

\begin{proof}
Note that $|z(4\pi)| \le \frac b 2 + 4\pi \frac{b}{8\pi} \le b$. Let us call $ P=  \omega(0) =  (1, 0, z(0))$ and $Q = \omega(4\pi)=  (1, 0, z(4\pi))$.

For each $\theta_0 \in (0,2\pi)$ the open set
\[\big(S^1\times (-1,1)\big) \setminus \big(  \omega([0,4\pi]) \cup \{(\cos\theta_0, \sin\theta_0)\} \times[-b,b] \big)\]
 has exactly two connected components. The curve $\tilde \omega$, which connects $Q$ and $P$ without intersecting $\omega\big((0,4\pi)\big)$  starts in the upper connected component (the one containing a neighborhood of $Q$) and finishes in the lower connected component (the one containing a neighborhood of $P$). Hence there is at least one time $t_{\theta_0}\in (t_1, t_2)$ at which $\tilde \omega$ intersects the segment $\{(\cos\theta_0, \sin \theta_0)\} \times[-b,b]$ to go from the upper to the lower components. It easily follows that $t_{\theta_0}$ in $A$. 

For the last inequality in the statement, we use that, as shown above,  
the image of $\tilde \omega(A)$  under the the projection of $S^1\times (-1,1) \rightarrow S^1$ has length $2\pi$.
It follows that the length of $\tilde \omega(A)$ is at least $2\pi$, and thus also the length of $A$ (since 
$\tilde \omega$  is parametrized by the arc length).
\end{proof}


We can now give the 

\begin{proof}[Proof of Lemma \ref{lemcurv}]
 Let us assume that $5\pi N\leq L < 5\pi (N+1)$, where $N>0$ is an integer. We need to bound $N$. Hence, we may clearly assume that $N$ is large enough.

 Let us consider the $N$ disjoint intervals $I_j:= [5\pi (j-1), 5\pi j)$, $1\le j\le N$ which are subsets of $[0,L)$.
Let
\[ \kappa_j:=  (1-s)\int_{I_j} \int_0^L |\nu(t)-\nu(\bar t)|^2 k_s\big(\gamma_0(t), \gamma_0(\bar t)\big)\, dt\, d\bar t  \]
and let $j_1,j_2,\dots, j_N$ be an ordering  for which
\[
\kappa_{j_1}\le \kappa_{j_2}\le \cdots\le \kappa_{j_N}.
\]

Choose $M := \lfloor N/2\rfloor$ and notice that
\begin{equation}\label{jjj}
\begin{split}
\max_{1\le i\le M} \kappa_{j_i} &= \kappa_{j_M} \le \frac{1}{N-M} \sum_{i=M+1}^N  \kappa_{j_i}
\\
&=  \frac{1}{N-M}  (1-s) \sum_{i=M+1}^N \int_{I_{j_i}} \int_0^L |\nu(t)-\nu(\bar t)|^2 k_s\big(\gamma_0(t), \gamma_0(\bar t)\big)\, dt\, d\bar t
\\
&\le
\frac{1}{N-M}
(1-s)\int_0^L \int_0^L |\nu(t)-\nu(\bar t)|^2 k_s\big(\gamma_0(t), \gamma_0(\bar t)\big)\, dt\, d\bar t
 \\
&\le 2\frac{C_0(1-s)L}{N},
\end{split}
\end{equation}
where, in the last inequality, we have used assumption \eqref{hp-lemcurv}.

For the sake of clarity, we split the proof in 4 Steps.

 {\em Step 1.} Let us prove that for $I=I_{j_i}$, where  $1\le i\le M= \lfloor N/2\rfloor$, we have
\begin{equation}\label{goals1}
\|\nu(t)-e\|_{C^{1/4}(I)} \leq C \big[\nu\big]_{H^{\sigma}(I)} \le C\delta^{1/2} \quad \mbox{for some }e\in S^2,
\end{equation}
 $\sigma=(1+s)/2$ and 
\[
\delta :=  2\frac{C_0^2}{N}. 
\]

Indeed,  using in \eqref{jjj} the assumption $(1-s)L\le C_0$ we have that,  for $1\le i\le M= \lfloor N/2\rfloor$, the interval $I= I_{j_i}$ has length $5\pi$ and satisfies
\begin{equation}\label{smallerthandelta}
(1-s)\int_I \int_0^L |\nu(t)-\nu(\bar t)|^2 k_s\big(\gamma_0(t), \gamma_0(\bar t)\big)\,dt\, d\bar t\le  2\frac{C_0^2}{N} =\delta.
\end{equation}

Now using \eqref{ineqarcl} we deduce that
 \[
(1-s)\int_I \int_I \frac{|\nu(t)-\nu(\bar t)|^2}{|t-\bar t|^{2+s}}\,dt\, d\bar t\le C \delta,
\]
where $C$ is universal. 

That is, for $\sigma = \frac{1+s}{2}$ and $j=1,2,3$, we have (recall the definition of the $H^\sigma$-seminorm in Lemma \ref{poincare})
\[
\big[\nu^j\big]_{H^{\sigma}(I)} \le C\delta^{1/2}, \quad \mbox{where } \nu=(\nu^1,\nu^2,\nu^3).
\]
Using  Lemma \ref{poincare} we obtain
\[
\|\nu(t)-e\|_{C^{1/4}(I)} \leq C\big[\nu\big]_{H^{\sigma}(I)} \le C\delta^{1/2} \quad \mbox{for some }e\in \R^3.
\]
Note that (for $\delta$ small enough and up to changing $C$) we may assume that $e\in S^2$ since $\nu\in S^2$.
This proves that the velocity (or tangent) vector $\gamma_0'$ is almost perpendicular to $e$ in all of $I$ with a very small error in the angle controlled by $C\delta^{1/2}$. Recall that we may assume $\delta$ to be sufficiently small since, as mentioned in the beginning of the proof, $N$ may be assumed to be large enough.

 {\em Step 2}. We have proven in Step 1 that the restriction of $\gamma_0$ to $I$ is $C\delta^{1/2}$ close to tracing a maximal circle.
As pointed out in the beginning of the proof, we may assume that $N$ is large enough and thus that $\delta = 2C_0^2/N$ is small enough.

Note that since $I$ has length $5\pi>4\pi$, for $\delta$ small enough the curve $\gamma_0\big|_I$ makes two loops at the (topological) cylinder $S^{2}\cap \{ -1/4 < e\cdot x < 1/4\}$, and these loops are $C\delta^{1/2}$  close to the ``equator'' $S^{2}\cap\{e\cdot x =0\}$. In particular the ``vertical'' displacement is less than $C\delta^{1/2}$.
Intuitively,  since $\gamma_0$ is a closed curve it will have to come back again to the starting point of two loops, and since it does not have self-intersections, the only way this may happen is with $\gamma_0$ passing again between the two loops with the opposite orientation (i.e., ``undoing'' the loop).

More precisely, let us prove that
\begin{equation}\label{aaa}
\bar A:= \left\{ \bar t\in [0,L] \setminus I_\circ\ :\  |e \cdot \gamma_0(\bar t)| \le C\delta^{1/2} \  \mbox{and}\   \  e\cdot  \nu(\bar t)\le\frac {1}{100}\right\},
\end{equation}
where $I_\circ\subset I$ is an interval to be defined next and with $|I_0|\ge 3\pi$, satisfies 
\begin{equation}\label{bbb}
H^1(\bar A)\ge \frac{19}{10} \pi.
\end{equation}

Indeed, let us choose an orthonormal coordinate frame $X,Y, Z$ with origin at $0$ and with $Z$ directed as $e$.
Let us define  ``cylindrical'' coordinates  in $S^{2}\cap \{ -1/4 < e\cdot x < 1/4\}$ as follows
\[ X= \cos\theta \cos z, \ Y= \sin \theta \cos z, \ Z=\sin z.\]
Since $\gamma_0$ is a closed curve without self-intersections we may apply Lemma \ref{topolog} with
\[
\omega = \gamma_0|_{I_\circ}\,, \qquad  \tilde \omega = \gamma_0|_{ ([0,L]/\{0,L\}) \setminus {I_\circ}}\,,\qquad \mbox{and}\qquad b= C\delta^{1/2},
\]
where ${I_\circ}\subset I$ is an interval for which $\int_{I_\circ} \theta'( \gamma_0(t)) dt = 4\pi$ as in Lemma \ref{topolog} ---here we abuse notation and omit the fact that $\omega$ and $\tilde \omega$ would need to  reparametrized by the angle $\theta$ and by the arc length of the cylinder respectively.  
From the last equality we deduce, using $1= |\gamma_0'| = (\theta')^2\cos^2 z+ (z')^2 \ge (\theta')^2 (3/4)^2$ 
if $\delta$ is small enough, that $|I_\circ|\ge 3\pi$.  

Applying Lemma \ref{topolog}, the set
\[ A:= \{ \bar t\in ([0,L]/\{0,L\})\setminus I_\circ\ :\  |\tilde z(\bar t)| \le C\delta^{1/2},   \   \tilde \theta'(\bar t)\le0\}\] ---in the notation of Lemma \ref{topolog}---
satisfies $H^1(A)  \ge  \frac{19}{10} \pi$.
Here, on the right hand side  we need to choose number slightly smaller than $2\pi$ due to the fact 
that $\tilde\omega$ needs to be reparametrized by the arc length of the cylinder in oder to apply 
Lemma~\ref{topolog} (understanding that $\delta$ is chosen accordingly 
small enough so that the arc lengths on the sphere near the equator and on cylinder are almost the same).

Observe also that for every $ \bar t\in A$ we have that $|\tilde z(\bar t)|$ is very small (for $\delta$ small enough) and that $\tilde \theta'(\bar t)\leq 0$. As a consequence\footnote{{Note that if it was $|z(\bar t)|=0$ as some $\bar t$ the condition $\tilde\theta'(\bar t)\le0$  would be exactly equivalent to $\nu(\bar t)\cdot e\le 0$. Therefore, if $|z(\bar t)|$ is very small $ e\cdot \nu(\bar t)$ cannot bee too positive.}}
$$
 e\cdot \nu(\bar t)\le  \frac{1}{100},
$$
as before in \eqref{aaa}, provided that $\delta$ is small enough.
In other words, the normal vector to $\gamma_0$  at $\bar t$, which is tangent to $S^{2}$, can only have, at most, a tiny positive projection in the ``vertical'' direction  $e$. 

Hence, $A\subset \bar A$ and  \eqref{bbb} follows.

{\em Step 3.}
For each given $\bar t\in \bar A$ there exists $t'\in I_\circ$ such that  $|\gamma_0(\bar t)- \gamma_0(t')| \le C\delta^{1/2}$, with $C$ universal, since $\gamma_0|_{I_\circ}$ makes two full loops to the equator.
Hence, we deduce that
\begin{equation}\label{aaaa}
\begin{split}
\int_{I_0}  \frac{1}{ \big|\gamma_0(t)- \gamma_0(\bar t)\big|^{2+s}} \, dt &\ge c \int_{I_0}  \frac{1}{ \big(\delta^{1/2} +|\gamma_0(t)- \gamma_0(t')| \big)^{2+s}} \, dt
\\
&\geq c\int_{I_0}  \frac{1}{ \big(\delta^{1/2} +|t-t'| \big)^{2+s}} \, dt\\
&\ge
c  \int_{0}^{\frac{3\pi}{2}}  \frac{1}{ \big(\delta^{1/2} + \tau \big)^{2+s}} \, d\tau \ge c (\delta^{1/2}  )^{-1-s},
\end{split}
\end{equation}
where in the third inequality we have used that $t'\in I_0$ and that $I_0$ is an interval of length at least $3\pi$.

Now, notice that for all $t\in I_0$ and $\bar t\in  \bar A$ we have $\big|\nu(t)-\nu(\bar t)\big| \ge 1$ 
---since the angle between $\nu(t)$ and $\nu(\bar t)$ is at least of $85^o$. In addition, 
recall \eqref{smallerthandelta} and \eqref{bbb} to~obtain
\begin{equation}\label{bbbb}
\begin{split}
\delta &\ge (1-s)\int_{I_0} \int_{\bar A} \big|\nu(t)-\nu(\bar t)\big|^2 k_s\big(\gamma_0(t), \gamma_0(\bar t)\big)\, d\bar t \, dt
\\
&\ge (1-s) c\int_{\bar A} \int_{I_0} \frac{1}{ \big|\gamma_0(t)- \gamma_0(\bar t)\big|^{2+s}} \, dt\, d\bar t
\\
&\ge  (1-s)  \frac{cH^1(\bar A)}{(\delta^{1/2})^{1+s}}
\\
& \ge  (1-s)\frac{c}{(\delta^{1/2})^{1+s}}
\end{split}
\end{equation}
for different universal constants $c>0$.

It follows that
\[ \left(2\frac{C_0^2}{N}\right)^{-1-\frac{1+s}{2}} = \delta^{-1-\frac{1+s}{2}} \le \frac{C}{1-s}.\]
Hence, using that $s\ge 1/2$, we obtain
\[ L\le 5\pi (N+1) \le  \frac{C}{(1-s)^{4/7}},\]
where $C$ depends only on $C_0$.

{\em Step 4.} Next we repeat exactly the same argument as in Steps 1, 2, and 3   but now using \eqref{jjj} together with the improved estimate $L \le C(1-s)^{-4/7}$ instead of $L\le C_0(1-s)^{-1}$.
We now have that, for $1\le i\le M= \lfloor N/2\rfloor$, the interval $I= I_{j_i}$ has length $5\pi$ and satisfies
\begin{equation}\label{smallerthandelta1}
\begin{split}
(1-s)\int_I \int_0^L |\nu(t)-\nu(\bar t)|^2 k_s\big(\gamma_0(t), \gamma_0(\bar t)\big)\,dt\, d\bar t &\le 2\frac{C_0(1-s)L}{N} \\
&\le \frac{C(1-s)^{3/7}}{N}=:\delta'.
\end{split}
\end{equation}

Therefore, arguing exactly  as above, we obtain
\[ \left(\frac{C(1-s)^{3/7}}{N}\right)^{-1-\frac{1+s}{2}} = (\delta')^{-1-\frac{1+s}{2}} \le \frac{C}{1-s},\]
where $C$ depends only on $C_0$.

Hence
\[ \frac{N}{(1-s)^{3/7}} \le  \frac{C}{(1-s)^{4/7}}\]
and thus
\[ L\le 5\pi (N+1) \le  \frac{C}{(1-s)^{1/7}},\]
where $C$ depends only on $C_0$.

Finally we repeat exactly the same argument once more but now using \eqref{jjj} together with the improved estimate $L \le C(1-s)^{-1/7}$ instead of $L \le C(1-s)^{-4/7}$.
We now have that, for $1\le i\le M= \lfloor N/2\rfloor$ the interval $I= I_{j_i}$ satisfies
\begin{equation}\label{smallerthandelta2}
\begin{split}
(1-s)\int_I \int_0^L |\nu(t)-\nu(\bar t)|^2 k_s\big(\gamma_0(t), \gamma_0(\bar t)\big)\,dt\, d\bar t& \le \ 2\frac{C_0(1-s)L}{N} \\
&\le \frac{C(1-s)^{6/7}}{N}=:\delta''.
\end{split}
\end{equation}

Therefore, 
\[ \left(\frac{C(1-s)^{6/7}}{N}\right)^{-1-\frac{1+s}{2}} = (\delta'')^{-1-\frac{1+s}{2}} \le \frac{C}{1-s},\]
and we conclude 
\[ \frac{N}{(1-s)^{6/7}} \le  \frac{C}{(1-s)^{4/7}}\]
and 
\[ L\le 5\pi (N+1) \le  C(1-s)^{2/7},\]
where $C$ depends only on $C_0$.

Note that when $s\uparrow 1$ the previous inequality does not really lead to a contradiction since, to obtain it, we assumed that $L \ge 5\pi N$ with $N\ge 1$ large enough (depending on $C_0$).
It follows from this observation that $L\le C$ for $s$ close to $1$, where $C$ depends only on $C_0$.
 \end{proof}

Finally, once we know that $L\le C$, with $C$ are universal and in particular independent of $s$ for $s\in(1/2,1)$, we  conclude from the integral control on the squared nonlocal second fundamental form that $\gamma_0$  converges in $C^{1,1/4}$ norm to a maximal circle as $s\uparrow 1$. This is the content of the next result.
\begin{lem}\label{lemcurv2}
Let $s\in(1/2,1)$,  $L>0$, $\gamma_0$, and $\nu$ be as in Lemma \ref{lemcurv}. In particular, we assume that, for some constant $C_0$,
\begin{equation}\label{hp-lemcurv2}
\int_0^L \int_0^L |\nu(t)-\nu(\bar t)|^2 k_s\big(\gamma_0(t), \gamma_0(\bar t)\big)\, dt\, d\bar t\le C_0 L.
\end{equation}

Assume in addition that
\[ 0<   L\le C_0.\]

Then,  $L\rightarrow 2\pi$ as $s\uparrow 1$ and, for some $e\in S^2$, we have
\[  \|\nu-e\|_{C^{1/4}([0,L]/\{0,L\})} \le C(1-s)^{1/2}\]
where $C$ is some constant depending only on $C_0$.

In, particular, for $s$ close enough to $1$, the cone generated by (the image of) $\gamma_0$ is a very flat Lipschitz graph.
\end{lem}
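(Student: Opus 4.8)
The plan is to first pin down the length $L$ using a cleaner version of the argument behind Lemma \ref{lemcurv}, now that we already know $L\le C_0$, and then bootstrap the integral control into a $C^{1,1/4}$ estimate for $\nu$. Since $L$ is bounded by a universal constant, the interval $[0,L]/\{0,L\}$ (with its circle metric) is covered by a bounded number of arcs of length $5\pi$ (or we rescale $[0,L]$ to $[0,5\pi]$ and track constants), so from \eqref{hp-lemcurv2} together with the lower bound \eqref{ineqarcl} for $k_s$ along an arc-length parametrized curve we get
\[
(1-s)\int_0^L\int_0^L \frac{|\nu(t)-\nu(\bar t)|^2}{|t-\bar t|^{2+s}}\,dt\,d\bar t \le C\,C_0 L \le C(C_0),
\]
so $[\nu]_{H^{(1+s)/2}([0,L]/\{0,L\})}^2 \le C(1-s)$. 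Applying Lemma \ref{poincare} (rescaled to an interval of length $L\le C_0$, which only changes constants) componentwise to $\nu=(\nu^1,\nu^2,\nu^3)$ yields a vector $e\in\R^3$ with $\|\nu-e\|_{C^{1/4}([0,L]/\{0,L\})}\le C(1-s)^{1/2}$, and since $|\nu|\equiv1$ we may take $e\in S^2$ after adjusting $C$. This is already the claimed $C^{1/4}$ bound for $\nu$; because $\gamma_0'= \gamma_0\wedge\nu$ (up to sign/orientation conventions), it also gives a $C^{1,1/4}$ control of $\gamma_0$ itself once we know $\gamma_0$ is close to a fixed great circle.

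Next I would identify the limit length. The bound $\|\nu-e\|_{L^\infty}\le C(1-s)^{1/2}$ says $\gamma_0'(t)$ is within $C(1-s)^{1/2}$ of the plane $e^\perp$ for every $t$, so $\gamma_0$ stays within a $C(1-s)^{1/2}$-neighborhood (on $S^2$) of the great circle $S^2\cap e^\perp$, and its tangent is almost parallel-transported along that circle. For $s$ close to $1$ the curve is therefore a small $C^{1,1/4}$ graph over the equator $S^2\cap e^\perp$; being closed, simple, and $C^2$, it must wind exactly once (zero or $\ge 2$ windings are excluded: zero winding would force, by the closedness and the $C(1-s)^{1/2}$ smallness, that $\gamma_0$ bounds a tiny disk, contradicting that $|\gamma_0'|\equiv1$ gives length $L\le C_0$ but $L\ge$ a fixed positive number only if it goes around — more directly, a noncontractible simple closed curve $C^{1}$-close to a great circle winds exactly once; and $\ge2$ windings contradicts injectivity exactly as in Step 2 of the proof of Lemma \ref{lemcurv}, via Lemma \ref{topolog}). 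Winding number one plus the $C^{1}$-closeness to $S^2\cap e^\perp$ forces $L=\mathrm{length}(\gamma_0)\to \mathrm{length}(S^2\cap e^\perp)=2\pi$ as $s\uparrow1$, since the length is the integral of $|\gamma_0'|=1$ and equals $2\pi$ plus a quantity controlled by $\|\gamma_0'-(\text{unit tangent to equator})\|$, which is $O((1-s)^{1/2})$.

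The main obstacle is the topological/geometric step ruling out multiple windings and extracting exactly winding number one: the $C^{1/4}$ (not $C^1$) control of $\nu$ must be leveraged carefully, but this is exactly the situation already handled in Steps 2–3 of Lemma \ref{lemcurv} using Lemma \ref{topolog} — if $\gamma_0$ made two loops near the equator it would, by injectivity, have to ``undo'' them with reversed orientation, and then the same double integral estimate \eqref{bbbb} (applied with the current, already small, right-hand side $\delta\le C(1-s)$) would give a contradiction for $s$ close to $1$. Once single winding is established, everything else is the soft quantitative argument above; I expect no genuine difficulty there beyond bookkeeping of the rescaling $[0,L]\to[0,5\pi]$ in Lemma \ref{poincare}. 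The final sentence of the lemma ---that the cone over $\gamma_0$ is a very flat Lipschitz graph--- is then immediate: a $C^{1,1/4}$ curve on $S^2$ within $C(1-s)^{1/2}$ of a great circle generates a cone whose boundary is, after rotating $e$ to $e_3$, the graph of a function with gradient of size $O((1-s)^{1/2})$ away from the origin, hence globally Lipschitz with small Lipschitz constant.
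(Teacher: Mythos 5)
Your proposal is correct and follows essentially the paper's own argument: after reparametrizing to a fixed circle, one combines \eqref{hp-lemcurv2}, \eqref{ineqarcl} and $L\le C_0$ to bound $\big[\nu\big]^2_{H^{(1+s)/2}}$ by $C(1-s)$, applies the circle version of Lemma \ref{poincare} to get $\|\nu-e\|_{C^{1/4}}\le C(1-s)^{1/2}$, and concludes that $\gamma_0$ is a small perturbation of a maximal circle, so $L\to 2\pi$ and the cone is a flat Lipschitz graph (the paper is in fact terser than you on the winding-number/$L\to 2\pi$ step, which your Lemma \ref{topolog}-based discussion handles adequately). One cosmetic caveat: closeness of the tangent to the plane $e^{\perp}$ alone does not place $\gamma_0$ near the great circle $S^2\cap e^{\perp}$ (horizontal circles at any height have horizontal tangents); the correct one-line reason is $\gamma_0\cdot\nu\equiv 0$, whence $|\gamma_0\cdot e|\le\|\nu-e\|_{L^\infty}\le C(1-s)^{1/2}$.
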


\begin{proof}
Since $\gamma_0$ is a closed curve, let us reparametrize it as follows:
\[
\hat \gamma_0 : S^1 \rightarrow S^2  \quad\mbox{where }\hat\gamma_0(\theta) =  \gamma_0\left(\frac{L}{2\pi}\theta \right), \  \theta \in S^1 \cong \R/\{2\pi \mathbb Z\},
\]
where we are identifying  $\R/\{2\pi \mathbb Z\}$ and  $\theta\in S^1$ via the isometry $\theta\mapsto (\cos\theta, \sin\theta)$.

Similarly as in the proof of Lemma \ref{lemcurv}, defining $\sigma = \frac{1+s}{2}$ and using now that $L\leq C_0$, we obtain
\[
\begin{split}
\big[\hat \nu^i\big]^2_{H^{\sigma}(S^1)}& = (1-\sigma) \int_{S^1}\int_{S^1}  \frac{ |\hat\nu(\theta)-\hat\nu(\bar \theta)|^2}{|(\cos\theta, \sin\theta)-(\cos\bar\theta,\sin\bar\theta)|^{1+2\sigma}}  \, d\theta \, d\bar \theta
 \\
 &\le C(1-s) L^{s}\int_0^L \int_0^L |\nu(t)-\nu(\bar t)|^2 k_s\big(\gamma_0(t), \gamma_0(\bar t)\big)\, dt\, d\bar t\le C(1-s)
\end{split}
\]
where 
$\hat \nu(\theta) = \nu\left(\frac{L}{2\pi}\theta\right)$ is the normal vector accordingly reparametrized, and where $C$ depends only on $C_0$. Here we have used that $t = \frac{L}{2\pi} \theta$, $\bar t = \frac{L}{2\pi} \bar\theta$,  $\hat \gamma_0'(\theta) = \frac{L}{2\pi}$, 
\[
\frac{L}{2\pi} |(\cos\theta, \sin\theta)-(\cos\bar\theta,\sin\bar\theta)| \ge |\hat \gamma_0(\theta)-\hat \gamma_0(\bar\theta)| = |\gamma_0(t)- \gamma_0(\bar t) |,
\]
and \eqref{ineqarcl}.

Using a small variation of Lemma \ref{poincare} ---for $S^1$ instead of an interval--- we obtain
\begin{equation}\label{last}
\|{\hat\nu}(t)-e\|_{C^{1/4}(S^1)} \leq C \big[{\hat\nu}\big]_{H^{\sigma}(S^1)} \le C(1-s)^{1/2} 
\end{equation}
for some $e\in S^2$. 

It follows that  $\hat\nu(t)$ is almost parallel to $e$ and thus $\gamma_0$ is a small perturbation of a maximal circle. In particular, $L\to 2\pi$ as $s\uparrow1$. Note also that the cone generated by $\gamma_0$ is a Lipschitz graph in the direction $e$ for $s$ sufficiently close to 1.
\end{proof}

\section{Proof of main theorem}
In this section we finally give the
\begin{proof}[Proof of Theorem \ref{thmcones3}]
Recall that by assumption $\Sigma$ is a stable minimal cone for the $s$-perimeter in $\R^3$.
Let us call $\gamma := \partial{\Sigma}\cap S^2$, where $S^2:= \{x\in \R^3\, :\, |x|=1\}$.

The curve $\gamma$ can be written as a disjoint union $\gamma = \gamma_1 \cup \gamma_2\cup \cdots\cup \gamma_J$, where $\gamma_i$ are closed $C^2$ oriented curves, each of them connected and without self-intersections.

Let $L_i$ denote the length of $\gamma_i$ and $L= \sum_{1\le i\le J} {L_i}$.
Applying Theorem \ref{perestimate} to the cone $\partial \Sigma$, we deduce that
\begin{equation}\label{roughperest}
L \le \frac{C}{1-s}.
\end{equation}
Throughout the proof $C$ will denote, possibly different, positive constants depending only on $s$ and bounded as $s \uparrow 1$.

{\em Step 1.}  Let us consider first the case $J=1$. In this case $\gamma$ is a connected closed curve.
By Proposition \ref{crucial3d} we have
\begin{equation}\label{key1A}
\iint_{\gamma\times \gamma} |\nu(\hat x)-\nu(\hat y)|^2 k_s(\hat x, \hat y) dH^1(\hat x) dH^1(\hat y)  =  \int_{\gamma} dH^1(\hat x) c^2_{\partial \Sigma}(\hat x)  \le CL,
\end{equation}
where $L$ is the length of $\gamma$. By Lemma \ref{lemcurv}, we know that $L\leq C$.

Therefore, using Lemma \ref{lemcurv2} we prove that, if $s\in (0,1)$ is close enough to $1$, then $\gamma$ is a small $C^{1,1/4}$ deformation of a maximal circle and thus $\partial \Sigma$ is a Lipschitz graph.

Since $\partial \Sigma$ is $C^2$ and stable away from $0$ then it is a viscosity solution of the fractional minimal surface equation in $\partial \Sigma\setminus \{0\}$.
Then,  since $\partial\Sigma$ is a cone  it must be\footnote{If a $C^2$ surface touches a cone at $0$ 
then the cone is contained in a half-space. Thus,  the convex envelope of the cone is a subsolution 
(of the fractional minimal surface equation) that touches $\partial \Sigma$ 
by below along generatrices. Since, $\partial\Sigma$ is $s$-minimal away from $0$  
the strong maximum principle yields that $\partial\Sigma$ must be a plane in such situation} a viscosity solution also at $0$.

Then, by the standard foliation argument (using the uniqueness of viscosity solutions of the fractional minimal surface equations among graphs), $\Sigma$ is a minimizer of the $s$-perimeter (and not just a stable set). Since now we know that $\partial \Sigma$ is a Lipschitz $s$-minimal graph, we can apply Theorem 1.1 in \cite{FV} and deduce that $\Sigma$ is $C^{\infty}$ and hence, being a cone, it is necessarily a hyperplane.

{\em Step 2.} Let us now assume that $J>1$ and reach a contradiction. Now,  \eqref{key1A} reads
\begin{equation}\label{key11}
\sum_{1\le i\le J} \int_{\gamma_i} dH^1(\hat x) \int_{\gamma} dH^{1}(\hat y)\, \big|\nu_\Sigma(\hat x)-\nu_\Sigma(\hat y)\big|^2 \,  k_s(\hat x, \hat y)   \le C\sum_{1\le i \le J} L_i.
\end{equation}

For each $i$ let
\[q_i : =  \frac{1}{L_i} \int_{\gamma_i} dH^1(\hat x) \int_{\gamma}  dH^{1}(\hat y) \,\big|\nu_\Sigma(\hat x)-\nu_\Sigma(\hat y)\big|^2 \,  k_s(\hat x, \hat y).\]
Without loss of generality let us assume that  $q_{1}\le q_{2}\le\dots\le q_{J}$, after relabeling the indexes.

By \eqref{key11} we have
\[
\frac{\sum_{1\le i\le J} L_i q_i }{\sum_{1\le i\le J} L_i  }\le  C
\]
and hence,
\[ q_1 \le C.\]
Then, Lemmas \ref{lemcurv} and \ref{lemcurv2} yield
\[
0< \pi\le L_1 \leq C,
\]
with $C$ universal,  for $s$ sufficiently close to $1$.

It follows, by \eqref{key11}, that
\begin{equation}\label{blabla}
\sum_{i=2}^J L_i q_i \ge  C\left( C +\sum_{i=2}^J  L_i \right).
\end{equation}
Note that we have $\sum_{2\le i\le J} L_i\geq \pi$. Indeed, if this were not true, we would have
\[
\int_{\gamma_2} dH^1(\hat x) \int_{\gamma_2} dH^{1}(y) \big|\nu_\Sigma(\hat x)-\nu_\Sigma({\hat y})\big|^2 \,  k_s(\hat x, \hat y)    \le C
\]
and $L_2<\pi$. The proof of Lemma \ref{lemcurv2} then gives that $L_2$ is close to $2\pi$ if $s$ is sufficiently close to $1$ ---a contradiction with $L_2<\pi$.

Therefore, \eqref{blabla} yields
\[
\sum_{i=2}^J L_i q_i \le  C \sum_{i=2}^J L_i
\]
and thus
\[ q_2 \le C.\]
Then, using again Lemma  \ref{lemcurv}  we find that
\[
 L_2 \le C,
\]
with $C$ universal.

Next, using Lemma  \ref{lemcurv2}, we have
\[
\|\nu_i (t)-e_i\|_{C^{1/4}(I_i)} \le C(1-s)^{1/2}, \quad \mbox{for some }e_i \mbox{ in }S^2
\]
and $i=1,2$, where $\nu_i(t)$ is the normal to $\gamma_i$ at $\gamma_i(t)$ ---recall that $\gamma_i$ are parametrized by the arc length in an interval $I_i$.
Since the two curves do not intersect and are $C(1-s)^{1/2}$ close to maximal circles, we must have either 
\[
|e_1-e_2| \le C(1-s)^{1/2} \qquad \mbox{or} \qquad |e_1+e_2| \le C(1-s)^{1/2}.
\]
In other words, the two curves are very close to the the same maximal circle (in $C^{1,1/4}$ norm), but they may have either the same or opposite orientation.

In the second case (opposite orientations) we use that $q_1 L_1\le   C$ and reason exactly as in Step 3 of the proof of Lemma \ref{lemcurv} ---more precisely, as in \eqref{aaaa} and \eqref{bbbb}--- to obtain
\begin{equation}\label{final}
\begin{split}
\frac{1}{C \big( (1-s)^{1/2}\big)^{1+s} } &\le   \int_{\gamma_1} dH^1(\hat x) \int_{\gamma_2}dH^{1}(\hat y) \frac{ 1}{|\hat x-\hat y|^{2+s}} 
\\
&\le   \int_{\gamma_1} dH^1(\hat x) \int_{\gamma_2}dH^{1}(\hat y) \frac{ \big|\nu_\Sigma(\hat x)-\nu_\Sigma(\hat y)\big|^2 }{|\hat x-\hat y|^{2+s}} 
\\
&\le  C\int_{\gamma_1} dH^1(\hat x) \int_{\gamma_2} dH^{1}(\hat y)\big|\nu_\Sigma(\hat x)-\nu_\Sigma(\hat y)\big|^2 \, k_s(\hat x,\hat y) 
\\
& \le q_1 L_1 \le C.
\end{split}
\end{equation}
This yields to a contradiction if $s$ is close to~$1$.

In the first case, if the two curves $\gamma_1$ and $\gamma_2$ happen to have the same orientation,  since $\gamma \subset S^2$ is a boundary (of the set $\Sigma\cap S^2$) then there must be a third curve $\gamma_{j_*}$ with the opposite orientation and trapped between $\gamma_1$ and $\gamma_2$. In this case, reasoning as in  \eqref{final} with $\gamma_2$ replaced by  $\gamma_{j_*}$ we reach a contradiction if $s$ is close to~$1$.
Note tough that here we need to be a bit more careful since we have not proven that $\gamma_{j_*}$ is very close to the maximal circle in $C^{1,1/4}$ norm but just in Hausdorff distance (we know that it is trapped between two small perturbations of a maximal circle). However, we can proceed exactly as we did in \eqref{aaa}: define the set $\bar A$ of times $\bar t$
such that $e_1\cdot \nu_{j_*}(\bar t) \le \frac{1}{100}$, which will satisfy \eqref{bbb},  and repeat \eqref{final} but  
integrating only on the set $\{\hat y \in \gamma_{j_*}(\bar A)\}$ and not along the whole $\gamma_{j_*}$. 
Doing so we guarantee that $|\nu_\Sigma(\hat x)-\nu_\Sigma(\hat y)| \ge 1$ and the computation  
would be again identical as in \eqref{aaaa} and \eqref{bbbb}.

\end{proof}

\end{document}